\newlist{hypothenum}{enumerate}{3}
\setlist[hypothenum,1]{label=(\roman*)}
\newcolumntype{C}{>{\centering\arraybackslash}X} 
\setlist[tablenotes]{label=\tnote{\alph*},ref=\alph*,itemsep=\z@,topsep=\z@skip,partopsep=\z@skip,parsep=\z@,itemindent=\z@,labelindent=\tabcolsep,labelsep=.2em,leftmargin=*,align=left,before={\footnotesize}}
\numberwithin{equation}{section}
\theoremstyle{plain}
\newtheorem*{mainthm}{Main Theorem}
\newtheorem{theorem}{Theorem}[section]
\newtheorem*{problem*}{Problem}
\newtheorem{proposition}[theorem]{Proposition}
\newtheorem{lemma}[theorem]{Lemma}
\newtheorem{corollary}[theorem]{Corollary}
\newtheorem{assumption}[theorem]{Assumption}
\theoremstyle{definition}
\newtheorem{definition}[theorem]{Definition}
\theoremstyle{remark}
\newtheorem{remark}[theorem]{Remark}
\newcommand{\eps}{\varepsilon}
\newcommand{\setsuch}[2]{\left\{ #1 \; \middle| \; #2 \right\}}
\newcommand{\subl}{{\mathsmaller{<}}}
\newcommand{\subg}{{\mathsmaller{>}}}
\newcommand{\sube}{{\mathsmaller{=}}}
\newcommand{\subge}{{\mathsmaller{\geq}}}
\newcommand{\subenz}{\Bumpeq}
\newcommand{\sublap}{{\mathsmaller{\lesssim}}}
\newcommand{\subgap}{{\mathsmaller{\gtrsim}}}
\newcommand{\transl}{t}
\newcommand{\extra}{a}
\DeclareMathOperator{\Span}{Span}
\DeclareMathOperator{\Aff}{Aff}
\DeclareMathOperator{\Ad}{Ad}
\DeclareMathOperator{\GL}{GL}
\DeclareMathOperator{\PSL}{PSL}
\DeclareMathOperator{\SO}{SO}
\DeclareMathOperator{\PSO}{PSO}
\DeclareMathOperator{\Orth}{O}
\newcommand{\fundef}[5]{
\entrymodifiers={+!!<0pt,\fontdimen22\textfont2>}
\xymatrix@R=3pt{\llap{$#1$\;\;} {#2} \ar@{->}[r] & {#3} \\ {#4} \ar@{|->}[r] & {#5}}
} 
\newcommand{\ie}{i.e.\ }
\newcommand{\eg}{e.g.\ }
\newcommand{\jordan}{\operatorname{Jd}}
\begin{document}

\title{Construction of Milnorian representations}
\author{Ilia Smilga}
\date{}
              
\maketitle

\begin{abstract}
We prove a partial converse to the main theorem of the author's previous paper \emph{Proper affine actions: a sufficient criterion} (submitted; available at~\url{arXiv:1612.08942}). More precisely, let $G$ be a semisimple real Lie group with a representation $\rho$ on a finite-dimensional real vector space $V$, that does \emph{not} satisfy the criterion from the previous paper. Assuming that $\rho$ is irreducible and under some additional assumptions 
on~$G$ and~$\rho$, we then prove that there does \emph{not} exist a group of affine transformations acting properly discontinuously on~$V$ whose linear part is Zariski-dense in $\rho(G)$.
\end{abstract}

\section{Introduction}

\subsection{Background and motivation}
\label{sec:background}

The present paper is part of a larger effort to understand discrete groups $\Gamma$ of affine transformations (subgroups of the affine group $\GL_n(\mathbb{R}) \ltimes \mathbb{R}^n$) acting properly discontinuously on the affine space $\mathbb{R}^n$. The case where $\Gamma$ consists of isometries (in other words, $\Gamma \subset \Orth_n(\mathbb{R}) \ltimes \mathbb{R}^n$) is well-understood: a classical theorem by Bieberbach says that such a group always has an abelian subgroup of finite index.

We say that a group $G$ acts \emph{properly discontinuously} on a topological space $X$ if for every compact $K \subset X$, the set $\setsuch{g \in G}{g K \cap K \neq \emptyset}$ is finite. We define a \emph{crystallographic} group to be a discrete group $\Gamma \subset \GL_n(\mathbb{R}) \ltimes \mathbb{R}^n$ acting properly discontinuously and such that the quotient space $\mathbb{R}^n / \Gamma$ is compact. In \cite{Aus64}, Auslander conjectured that any crystallographic group is virtually solvable, that is, contains a solvable subgroup of finite index. Later, Milnor \cite{Mil77} asked whether this statement is actually true for any affine group acting properly discontinuously. The answer turned out to be negative: Margulis \cite{Mar83, Mar87} found a counterexample in dimension~$3$. On the other hand, Fried and Goldman \cite{FG83} proved that the Auslander conjecture does hold in dimension~$3$ (the cases $n=1$ and $2$ are easy). Recently, Abels, Margulis and Soifer \cite{AMS12} proved it in dimension $n \leq 6$, and independently Tomanov \cite{Tom16} proved it in dimension $n \leq 5$. See \cite{AbSur} for a survey of already known results.

Following Margulis's breatkthrough, numerous other counterexamples to Milnor's conjecture have been found. It is then natural to try to classify them by their Zariski-closure; in other words, we ask ourselves the following question:
\begin{problem*}
\label{proper_affine_existence}
Let $G$ be any real algebraic group, $\rho: G \to \GL(V)$ some representation on a finite-dimensional real vector space $V$. The data of $\rho$ then allows us to define the ``affine group'' $G \ltimes V$. Can we find a subgroup $\Gamma \subset G \ltimes V$ whose linear part is Zariski-dense in $G$ and that is free, nonabelian and acts properly discontinuously on the affine space corresponding to $V$?
\end{problem*}
Say that the representation~$\rho$ is \emph{non-Milnorian} if the answer is positive, \emph{Milnorian} if the answer is negative. Let us focus more specifically on the case where the group~$G$ is semisimple, and the representation~$\rho$ is irreducible.

In this setting, the author has found a sufficient condition~\cite{Smi16b} for a representation to be non-Milnorian (encompassing all the previously known examples of non-Milnorian representations, such as \cite{Mar83}, \cite{AMS02} and \cite{Smi14}; see the introduction to~\cite{Smi16b} for a brief summary of all of these partial results). He has conjectured that this condition is actually necessary and sufficient. This paper proves that, assuming that it satisfies some additional conditions, any representation that fails the test from~\cite{Smi16b} is indeed Milnorian.

So far, the following examples of Milnorian representations have been known:
\begin{itemize}
\item All the representations that do not have $0$ as a restricted weight are certainly Milnorian. This includes in particular the standard representation of $G = \SO^+(n, n)$ (acting on $V = \mathbb{R}^{2n}$). In Section~\ref{sec:non_radical} we have spelled out the proof of this fact, but this argument is obvious and has been known for a long time.
\item Abels, Marguls and Soifer \cite{AMS02} have proved that for all $n \geq 1$, the standard representation of $G = \SO^+(n+1, n)$ (acting on $V = \mathbb{R}^{2n+1}$) is Milnorian if $n$~is even (Theorem~A), non-Milnorian otherwise (Theorem~B).
\item They later proved (Theorem A in~\cite{AMS11}) that for all natural integers $p, q$ such that $|p - q| \geq 2$, the standard representation of $G = \SO^+(p, q)$ (acting on $V = \mathbb{R}^{p+q}$) is Milnorian.
\end{itemize}
The proof presented here allows us to derive Theorem~A from~\cite{AMS02} as a particular case, but does not yet cover the representations covered by Theorem~A from~\cite{AMS11}.

In order to state our theorem, we need to introduce a few classical notations.

\subsection{Basic notations}
\label{sec:lie}

For the remainder of the paper, we fix a semisimple real Lie group~$G$; let $\mathfrak{g}$~be its Lie algebra. Let us introduce a few classical objects related to~$\mathfrak{g}$ and~$G$ (defined for instance in Knapp's book \cite{Kna96}, though our terminology and notation differ slightly from his).

We choose in $\mathfrak{g}$:
\begin{itemize}
\item a Cartan involution $\theta$. Then we have the corresponding Cartan decomposition $\mathfrak{g} = \mathfrak{k} \oplus \mathfrak{q}$, where we call $\mathfrak{k}$ the space of fixed points of $\theta$ and $\mathfrak{q}$ the space of fixed points of $-\theta$. We call $K$ the maximal compact subgroup with Lie algebra $\mathfrak{k}$.
\item a \emph{Cartan subspace} $\mathfrak{a}$ compatible with $\theta$ (that is, a maximal abelian subalgebra of $\mathfrak{g}$ among those contained in $\mathfrak{q}$). We set $A := \exp \mathfrak{a}$.
\item a system $\Sigma^+$ of positive restricted roots in $\mathfrak{a}^*$. Recall that a \emph{restricted root} is a nonzero element $\alpha \in \mathfrak{a}^*$ such that the restricted root space
\[\mathfrak{g}^\alpha := \setsuch{Y \in \mathfrak{g}}{\forall X \in \mathfrak{a},\; [X, Y] = \alpha(X)Y}\]
is nontrivial. They form a root system $\Sigma$; a system of positive roots $\Sigma^+$ is a subset of $\Sigma$ contained in a half-space and such that $\Sigma = \Sigma^+ \sqcup -\Sigma^+$. (Note that in contrast to the situation with ordinary roots, the root system $\Sigma$ need not be reduced; so in addition to the usual types, it can also be of type~$BC_n$.)

We call~$\Pi$ be the set of simple restricted roots in~$\Sigma^+$. We call
\[\mathfrak{a}^{++} := \setsuch{X \in \mathfrak{a}}{\forall \alpha \in \Sigma^+,\; \alpha(X) > 0}\]
the (open) dominant Weyl chamber of $\mathfrak{a}$ corresponding to~$\Sigma^+$, and
\[\mathfrak{a}^{+} := \setsuch{X \in \mathfrak{a}}{\forall \alpha \in \Sigma^+,\; \alpha(X) \geq 0} = \overline{\mathfrak{a}^{++}}\]
the closed dominant Weyl chamber.
\end{itemize}
Then we call:
\begin{itemize}
\item $M$ the centralizer of $\mathfrak{a}$ in $K$, $\mathfrak{m}$ its Lie algebra.
\item $L$ the centralizer of $\mathfrak{a}$ in $G$, $\mathfrak{l}$ its Lie algebra. It is clear that $\mathfrak{l} = \mathfrak{a} \oplus \mathfrak{m}$, and well known (see \eg \cite{Kna96}, Proposition 7.82a) that $L = MA$.
\item $\mathfrak{n}^+$ (resp. $\mathfrak{n}^-$) the sum of the restricted root spaces of $\Sigma^+$ (resp. of $-\Sigma^+$), and $N^+ := \exp(\mathfrak{n}^+)$ and $N^- := \exp(\mathfrak{n}^-)$ the corresponding Lie groups.
\item $\mathfrak{p}^+ := \mathfrak{l} \oplus \mathfrak{n}^+$ and $\mathfrak{p}^- := \mathfrak{l} \oplus \mathfrak{n}^-$ the corresponding minimal parabolic subalgebras, $P^+ := LN^+$ and $P^- := LN^-$ the corresponding minimal parabolic subgroups.
\item $W := N_G(A)/Z_G(A)$ the restricted Weyl group.
\item $w_0$ the \emph{longest element} of the Weyl group, that is, the unique element such that $w_0(\Sigma^+) = \Sigma^-$. It is clearly an involution.
\end{itemize}

See Examples 2.3 and 2.4 in the author's earlier paper~\cite{Smi14} for working through these definitions in the cases $G = \PSL_n(\mathbb{R})$ and~$G = \PSO^+(n, 1)$.

Finally, if $\rho$~is a representation of~$G$ on a finite-dimensional real vector space~$V$, we call:
\begin{itemize}
\item the \emph{restricted weight space} in~$V$ corresponding to a form~$\lambda \in \mathfrak{a}^*$ the space
\[V^\lambda := \setsuch{v \in V}{\forall X \in \mathfrak{a},\; X \cdot v = \lambda(X)v};\]
\item a \emph{restricted weight} of the representation~$\rho$ any form~$\lambda \in \mathfrak{a}^*$ such that the corresponding weight space is nonzero.
\end{itemize}

\subsection{Statement of main result}

Let $\rho$~be an irreducible representation of~$G$ on a finite-dimensional real vector space~$V$. Without loss of generality, we may assume that $G$~is connected and acts faithfully. We may then identify the abstract group~$G$ with the linear group~$\rho(G) \subset \GL(V)$. Let~$V_{\Aff}$~be the affine space corresponding to~$V$. The group of affine transformations of~$V_{\Aff}$ whose linear part lies in~$G$ may then be written~$G \ltimes V$ (where $V$~stands for the group of translations). Here is the main result of this paper.
\begin{mainthm}
Suppose that $\rho$ satisfies the following conditions:
\begin{hypothenum}
\item \label{itm:bad} For every vector~$v$ fixed by all elements of~$L$, we have $\tilde{w}_0(v) = v$, where $\tilde{w}_0$ is any representative in~$G$ of~$w_0 \in N_G(A)/Z_G(A)$.
\item \label{itm:split} Every vector $v \in V$ that is fixed by all elements of~$A$ is actually fixed by all elements of~$L$;
\item \label{itm:non_swinging} There exists an element~$X_0 \in \mathfrak{a}$ such that~$-w_0(X_0) = X_0$ and for every nonzero restricted weight~$\lambda$ of~$\rho$, we have $\lambda(X_0) \neq 0$.
\end{hypothenum}
Then $\rho$~is Milnorian, \ie there does not exist a subgroup~$\Gamma$ in the affine group~$G \ltimes V$ whose linear part is Zariski-dense in~$G$ and that acts properly discontinuously on the affine space corresponding to~$V$.
\end{mainthm}
(Note that the choice of the representative~$\tilde{w}_0$ in the last condition does not matter, precisely because by assumption the vector~$v$ is fixed by~$L = Z_G(A)$.)

Let us give some comments about the conditions that we require on~$\rho$. Condition~\ref{itm:bad} here is the main one: this is just the negation of the condition that appears in the main theorem of~\cite{Smi16b}. The other two conditions are simplifying assumptions, that the author hopes to remove in the future. More specifically:
\begin{itemize}
\item Condition \ref{itm:split} is always satisfied if $G$ is split (indeed we then have $\mathfrak{l} = \mathfrak{a}$, and we may show that a vector is fixed by~$L$ (resp. $A$) if and only if it is fixed by its Lie algebra), but possibly also covers a few other cases.
\item Condition \ref{itm:non_swinging} is precisely the ``non-swinging'' assumption as introduced in~\cite{Smi16}. If $G$ is simple, then the only cases where this condition may fail are when its restricted root system is of type $A_n (n \geq 2)$, $D_{2n+1}$ or~$E_6$. (See also section~5.3 in~\cite{Smi16b} for a slightly more detailed discussion.)
\end{itemize}

\subsection{Strategy of the proof}
We proceed by contradiction: we suppose that $G \ltimes V$ contains a subgroup~$\Gamma$ with Zariski-dense linear part that acts properly discontinuously on~$V$.

We start, in Section~\ref{sec:non_radical}, by eliminating a trivial case: the case where the whole space of vectors fixed by~$A$ (also known as the zero restricted weight space $V^0$) is equal to zero. Starting from there, we always assume that this space is nontrivial.

We then heavily rely on the framework introduced in the author's previous paper~\cite{Smi16b}, that we briefly recall and slightly expand in Section~\ref{sec:generalized_schottky}. More specifically, we show (in Section~\ref{sec:subgroup_construction}) that such a group~$\Gamma$ necessarily contains a ``generalized Schottky'' subgroup, which satisfies verbatim all of the results proved in that paper, except for the last section. The key construction consists in associating, to every sufficiently nice element of $G \ltimes V$, some vector that is related to its translation part, called its \emph{Margulis invariant}. The most important result from~\cite{Smi16b} is then Proposition~10.2, which says that in this subgroup, the Margulis invariant of a generic word is roughly equal to the sum of the Margulis invariants of the letters.

However, at this point we have to diverge from the paper~\cite{Smi16b}. Indeed in Section~11 of~\cite{Smi16b}, we prove that the Margulis invariants of the elements of the group tend to infinity, and hence the group \emph{does} act properly. Here, on the contrary, we prove (in Section~\ref{sec:bounded_margulis}) that the Margulis invariants of the elements of the group accumulate in a bounded neighborhood of~$0$. For this, we use a method that is similar to the proof of Theorem~A in~\cite{AMS02}, but generalized to higher dimensions. In Section~\ref{sec:conclusion}, we then deduce from this fact that the group \emph{does not} act properly.

It is in this last deduction that we rely on conditions \ref{itm:split} and~\ref{itm:non_swinging} imposed on~$\rho$. The point of these two conditions is to ensure that the so-called \emph{quasi-translations} (as defined in Section~7.2 in~\cite{Smi16b}) are just ordinary translations (see Remark~\ref{quasi_translations_are_translations}). This in turn ensures that the Margulis invariant of an element contains all of the relevant information about its translation part (so that if the Margulis invariant is bounded, and the element has ``boundedly non-degenerate'' geometry, the translation part cannot escape to infinity).

\subsection{Acknowledgments}
I am very grateful to my Ph.D. advisor, Yves Benoist, who introduced me to this exciting and fruitful subject, and gave invaluable help and guidance in my initial work on this project.

I would also like to thank François Guéritaud for an interesting discussion a few years ago (in Luminy), that helped me shape this article in my mind long before I started actively working on it.

This work has been partially funded by the NSF grant DMS-1709952.

\section{Reduction to the case where $V^0 \neq 0$.}
\label{sec:non_radical}

We start by doing away with a trivial case.

\begin{proposition}
Suppose that $0$~is not a restricted weight of~$\rho$, \ie that $V^0 = 0$. Then there does not exist a subgroup~$\Gamma$ in the affine group~$G \ltimes V$ whose linear part is Zariski-dense in~$G$ and that acts properly discontinuously on the affine space corresponding to~$V$.
\end{proposition}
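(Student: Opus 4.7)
The plan is to use~$V^0 = 0$ to produce $\gamma \in \Gamma$ of infinite order whose affine action on~$V_{\Aff}$ has a fixed point~$p$; with $K := \{p\}$, the set $\setsuch{\delta \in \Gamma}{\delta K \cap K \neq \emptyset}$ will then contain the infinite cyclic group~$\langle \gamma \rangle$, directly contradicting proper discontinuity.

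First I would verify that the Zariski-open subset
\[U := \setsuch{g \in G}{\det(\Id_V - \rho(g)) \neq 0}\]
of~$G$ is nonempty. For this I pick~$X_0 \in \mathfrak{a}$ avoiding both the (finitely many) restricted-weight hyperplanes of~$\rho$ (possible because $0$~is not a weight) and the root hyperplanes of~$\mathfrak{g}$; then $h_0 := \exp X_0$ is regular semisimple in~$G$, $\rho(h_0)$ is diagonalizable with eigenvalues $e^{\lambda(X_0)}$ all real and different from~$1$, and $h_0$ has infinite order because $\exp\colon \mathfrak{a} \to A$ is injective. In particular $h_0 \in U$.

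The next, most delicate step would be to extract $\gamma \in \Gamma$ whose linear part lies in~$U$ \emph{and} has infinite order in~$G$. Zariski density of the linear part of~$\Gamma$ already yields infinitely many elements with linear part in~$U$, but does not by itself rule out torsion. To handle this I would pass to a finitely generated subgroup $\Gamma_0 \subseteq \Gamma$ whose linear part remains Zariski-dense in~$G$ (which exists by Noetherianity of the Zariski topology) and invoke the Tits alternative: since~$G$ is semisimple, the linear part of~$\Gamma_0$ cannot be virtually solvable and therefore contains a nonabelian free subgroup~$F$, which is torsion-free and---after sharpening the ping-pong construction---may be arranged to be Zariski-dense in~$G$. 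Then $F \cap U$ is nonempty, so I may pick $\gamma \in \Gamma_0$ with linear part in~$F \cap U$; such a~$\gamma$ is automatically of infinite order. Invertibility of $\Id_V - \ell(\gamma)$ (writing $\ell(\gamma)$ for the linear part) then provides a unique affine fixed point $p \in V_{\Aff}$, and the relation $v_\gamma = (\Id_V - \ell(\gamma))p$ between translation and linear part gives $\gamma^n = e$ in $G \ltimes V$ iff $\ell(\gamma)^n = \Id_V$, so $\gamma$ retains infinite order in~$\Gamma$, yielding the contradiction. The main obstacle is precisely this extraction: purely Zariski-theoretic density arguments will not suffice, since the torsion locus $\bigcup_{n \geq 1}\{g \in G : g^n = e\}$ may be Zariski-dense in~$G$ over~$\mathbb{R}$, so a genuine input like the Tits alternative (or Benoist's theorem on loxodromic elements in Zariski-dense subgroups) appears indispensable.
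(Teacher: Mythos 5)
Your argument is correct and follows the same overall mechanism as the paper's: since $V^0 = 0$, the Zariski-open set of elements of $G$ without eigenvalue $1$ on $V$ is nonempty (your explicit witness $\exp X_0$ with $X_0$ off the weight hyperplanes is exactly why), so Zariski-density produces $\gamma \in \Gamma$ with an affine fixed point, and an infinite-order such $\gamma$ violates properness at the singleton $\{p\}$. You also correctly identify the one genuinely delicate point, namely that torsion cannot be excluded by Zariski-density alone. The only real divergence is how that point is handled. The paper passes to a finitely generated Zariski-dense subgroup and then invokes Selberg's lemma to replace $\Gamma$ by a torsion-free finite-index (still Zariski-dense) subgroup; any $\gamma$ found there with linear part in the good open set is automatically of infinite order. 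You instead invoke the Tits alternative and the existence of a Zariski-dense nonabelian free subgroup $F$ of the linear part, then intersect $F$ with the open set. Both routes work; yours requires the sharpened form of Tits (Zariski-density of the free subgroup is genuinely needed, since otherwise $F$ could miss the open set), which is a standard but somewhat heavier input than Selberg's lemma, whereas the paper's route gets torsion-freeness for the whole ambient group at once and needs no control on where the torsion-free elements sit. A small further difference: the paper concludes infinite order of $\gamma$ from torsion-freeness of the affine group element, while you conclude it from the infinite order of the linear part via the fixed-point relation $v_\gamma = (\Id_V - \ell(\gamma))p$; both are valid.
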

(See also Remark~3.5 in \cite{Smi16}.)

Note that this proposition actually gives a slight improvement of the Main Theorem: it tells us that at least if $V^0 = 0$ (which is a particular case of the condition~\ref{itm:bad}), we can dispense with the technical conditions \ref{itm:split} and~\ref{itm:non_swinging}.
\begin{proof}
By contradiction, let $\Gamma$ be such a group. By dimension arguments, it has a finitely-generated subgroup that is still Zariski-dense; so without loss of generality, we may assume that $\Gamma$~is finitely generated. We then have Selberg's lemma, which says that such a group $\Gamma$ is then virtually torsion-free, \ie contains a finite-index subgroup that is torsion-free. A finite-index subgroup of a Zariski-dense subgroup is still Zariski-dense; so without loss of generality, we may assume that $\Gamma$ is torsion-free.

Let $\tilde{G}$ be the set of elements of~$G$ that do not have~$1$ as an eigenvalue (when acting on~$V$). Clearly $\tilde{G}$ is Zariski-open in~$G$; and since $V^0 = 0$, this set is nonempty. Since by assumption $\ell(\Gamma)$ is Zariski-dense in~$G$, there exists an element $\gamma \in \Gamma$ whose linear part is in~$\tilde{G}$. This implies that $\gamma$ has a fixed point when acting on~$V_{\Aff}$ (but is not the identity). Since $\Gamma$~is torsion-free, $\gamma$ has infinite order. This contradicts properness of action.
\end{proof}

So from now on, we assume that this issue does not arise:
\begin{assumption}
\label{zero_is_a_weight}
From now on, we assume that $0$~is a restricted weight of~$\rho$:
\[\dim V^0 > 0.\]
\end{assumption}
(This is the same as Assumption~5.5 in~\cite{Smi16b}.)

In this case, we can say that all that is written in~\cite{Smi16b}, \emph{except} for the last section, still applies to our group $G \ltimes V$. From now on, we borrow all of the definitions and notations from \cite{Smi16b}.

\section{Generalized Schottky groups}
\label{sec:generalized_schottky}

The goal of this section is to recall the key lemma from~\cite{Smi16b}, namely Proposition~10.2, about ``generalized Schottky'' groups; and to formulate some easy corollaries of its proof.

\begin{definition}
For every generically symmetric and extreme vector~$X_0 \in \mathfrak{a}^+$ (see Section~5 in~\cite{Smi16b}) and for every constant $C \geq 1$, we say that a $k$-tuple $(g_1, \ldots, g_k)$ of elements of~$G \ltimes V$ is \emph{$C$-Schottky of type~$X_0$} if it satisfies the conditions of Proposition~10.2 in~\cite{Smi16b}. (Note that in that proposition, the definitions of $\rho$-regularity, $C$-non-degeneracy and of the Margulis invariant $M(g)$ all implicitly depend on the choice of~$X_0$.)

If $\Gamma$~is the group generated by this tuple, by abuse of terminology, we will also sometimes say that $\Gamma$ is \emph{$C$-Schottky of type~$X_0$}. Of course the reader has to keep in mind that this is not really a property of $\Gamma$ as an abstract group, but of $\Gamma$ together with a certain choice of a generating set.
\end{definition}

We may now restate Proposition~10.2 from~\cite{Smi16b}. Recall that:
\begin{itemize}
\item the notion of $\rho$-regularity is introduced in Definition~6.12 in~\cite{Smi16b};
\item $A^{\subgap}_g$ (resp. $A^{\sublap}_g$) is the \emph{affine ideally noncontracting} (resp.~\emph{nonexpanding}) \emph{space} corresponding to~$g$: see Definition~7.5 in~\cite{Smi16b};
\item \emph{$C$-non-degeneracy} is a quantitative measure of transversality: see Definition~7.20 in~\cite{Smi16b};
\item $s_{X_0}(g)$ is the \emph{contraction strength} of~$g$: see Definition~7.22 in~\cite{Smi16b};
\item $M(g)$ is the \emph{Margulis invariant} of~$g$, which measures its translation vector along some subspace fixed by the linear part of~$g$: see Definition~7.19 in~\cite{Smi16b}.
\end{itemize}
\begin{proposition}[\cite{Smi16b}]
\label{Schottky_properties}
Let $\Gamma = \langle g_1, \ldots, g_k \rangle$ be a $C$-Schottky subgroup of type $X_0$ in~$G \ltimes V$; and let $g = g_{i_1}^{\sigma_1} \cdots g_{i_l}^{\sigma_l}$ be any nonempty cyclically reduced word in~$\Gamma$. Then we have the following properties:
\begin{hypothenum}
\addtolength{\itemsep}{.5\baselineskip}
\item The map~$g$ is $\rho$-regular.
\item $\begin{cases}
       \alpha^\mathrm{Haus} \left(A^{\subgap}_{g},\;
                                  A^{\subgap}_{g_{i_1}^{\sigma_1}} \right)
          \lesssim_C 2 \left( 1 - 2^{-(l-1)} \right) s_{10.2}(C) \vspace{1mm} \\
       \alpha^\mathrm{Haus} \left(A^{\sublap}_{g},\;
                                  A^{\sublap}_{g_{i_l}^{\sigma_l}} \right)
          \lesssim_C 2 \left( 1 - 2^{-(l-1)} \right) s_{10.2}(C).
       \end{cases}$
\item $s_{X_0}(g) \leq 2^{-(l-1)} s_{10.2}(C)$.
\item $\displaystyle \left\| M(g) - \sum_{m=1}^{l} M(g_{i_m}^{\sigma_m}) \right\| \leq (l-1)\eps_{9.3}(2C)$.
\item If $h = g_{i'_1}^{\sigma'_1} \cdots g_{i'_{l'}}^{\sigma'_{l'}}$ is another nonempty cyclically reduced word such that $gh$ is also cyclically reduced, the pair $(g, h)$ is $2C$-non-degenerate.
\end{hypothenum}
\end{proposition}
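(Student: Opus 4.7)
The plan is a simultaneous induction on $\max(l, l')$, proving all five clauses (i)--(v) together because they feed into each other. The base case $l = l' = 1$ is essentially built into the definition of a $C$-Schottky tuple: each generator is by hypothesis $\rho$-regular with $C$-non-degenerate geometry, so (i), (iii) and (v) follow immediately, (ii) is vacuous, and (iv) is a trivial empty-sum statement.

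For the inductive step I would split the cyclically reduced word as $g = g_{i_1}^{\sigma_1} \cdot g''$, where $g'' = g_{i_2}^{\sigma_2} \cdots g_{i_l}^{\sigma_l}$ is cyclically reduced of length $l-1$. Applying the inductive hypothesis to $g''$ gives that $g''$ is $\rho$-regular with contraction $s(g'') \leq 2^{-(l-2)} s_{10.2}(C)$ and with attracting/repelling spaces Hausdorff-close to those of $g_{i_2}^{\sigma_2}$ and $g_{i_l}^{\sigma_l}$, respectively. Combining these Hausdorff bounds with the $C$-Schottky transversality between $g_{i_1}^{\sigma_1}$ and $g_{i_2}^{\sigma_2}$ yields $2C$-non-degeneracy of the pair $(g_{i_1}^{\sigma_1}, g'')$. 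I would then invoke the composition lemma from \cite{Smi16b} (essentially the dynamical content of Lemma~9.3 and the preceding estimates): for a $2C$-non-degenerate pair of $\rho$-regular elements with sufficiently small $s$-value, the product is $\rho$-regular, its attracting space is Hausdorff-close to that of the first factor, its repelling space to that of the second, the contraction is halved (giving $s(g) \leq 2^{-(l-1)} s_{10.2}(C)$), and the Margulis invariants satisfy $\| M(g) - M(g_{i_1}^{\sigma_1}) - M(g'') \| \leq \eps_{9.3}(2C)$. Combining this additivity with the inductive bound on $M(g'')$ yields the cumulative $(l-1)\eps_{9.3}(2C)$ error in (iv); the Hausdorff estimates in (ii) arise by summing the geometric series $\sum_m 2^{-m}$ across iterated compositions; and (v) for a general pair $(g, h)$ follows by using clause (ii) for both $g$ and $h$ together with the pairwise $C$-Schottky transversality of the extremal generators.

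The main obstacle is the bookkeeping of constants so that the induction genuinely closes: the non-degeneracy constant inflates from $C$ to $2C$ at each composition step, and one must ensure it does not cascade further. The reason it does not is structural — the composition lemma is applied only once per inductive step, treating the tail $g''$ as a single element rather than recursively decomposing it, so $2C$-non-degeneracy is always enough. The geometric $2^{-(l-1)}$ contraction in (iii) provides the uniform margin keeping all error terms summable; choosing $s_{10.2}(C)$ small enough at the outset guarantees that the composition lemma's transversality hypotheses remain satisfied uniformly in $l$. The truly delicate piece is the Margulis-invariant additivity, where an error term must be shown to depend only on $C$ and not on $l$, so that the total error grows at most linearly in $l$ rather than exponentially.
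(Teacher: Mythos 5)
Your high-level architecture --- a joint induction on $\max(l,l')$ proving all five clauses at once, with the base case supplied by the definition of a $C$-Schottky tuple and the inductive step driven by a composition lemma for non-degenerate pairs of $\rho$-regular elements (Proposition~9.3 and Lemma~8.3 of \cite{Smi16b}) --- is exactly the one the paper points to, and your treatment of point~(v) via the transversality of extremal letters matches the role of Lemma~\ref{different_first_letters_transverse}.

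There is, however, a concrete gap in your inductive step: you split $g = g_{i_1}^{\sigma_1}\cdot g''$ and assert that $g'' = g_{i_2}^{\sigma_2}\cdots g_{i_l}^{\sigma_l}$ is cyclically reduced, so that the inductive hypothesis applies to it. This is false in general: for the commutator $g = g_1 g_2 g_1^{-1} g_2^{-1}$, which is cyclically reduced, the tail $g'' = g_2 g_1^{-1} g_2^{-1}$ is not. The proposition --- and hence the inductive hypothesis, as well as Corollary~\ref{Margulis_additivity}, which the paper identifies as the engine of the induction step for point~(iv) and which requires \emph{all three} of $g'$, $g''$ and $g'g''$ to be cyclically reduced --- is only available for cyclically reduced words, so your appeal to it for the pair $(g_{i_1}^{\sigma_1}, g'')$ is not justified; and the statement genuinely cannot be extended to arbitrary reduced words (point~(iv) already fails for conjugates $\gamma f \gamma^{-1}$, since in general $M(\gamma^{-1}) \neq -M(\gamma)$). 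The cut must therefore be placed at a position where \emph{both} halves are cyclically reduced; such a position always exists for a cyclically reduced word of length at least $2$, but it need not be after the first letter, and this combinatorial fact has to be stated and used. This forces two further repairs to your bookkeeping. First, since the good cut may be balanced rather than ``one letter plus the rest,'' the claim that the composition step merely ``halves the contraction'' does not yield $s(g) \leq 2^{-(l-1)} s_{10.2}(C)$; one needs the composition lemma to improve $s$ multiplicatively in the two factors. Second, the two-sided $2C$-non-degeneracy of the pair $(g', g'')$ requires, besides transversality across the cut (reducedness of $g$), transversality between the attracting data of the first letter of $g'$ and the repelling data of the last letter of $g''$ --- that is, precisely the cyclic reducedness of $g$. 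Your proposal invokes only the transversality between $g_{i_1}^{\sigma_1}$ and $g_{i_2}^{\sigma_2}$ and never uses the hypothesis that $g$ is cyclically reduced, which is a reliable sign that a hypothesis essential to the statement has gone unused.
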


Let us now present two corollaries of this proposition. In fact, both of them were actually used in the induction step when proving the proposition. (This is not circular reasoning: more precisely, the proof of point~(iv) for a word~$g = g'g''$ relied on Corollary \ref{Margulis_additivity} applied to shorter words $g'$ and $g''$; while the proof of point~(v) relied on Corollary \ref{different_first_letters_transverse} which itself relies only on point~(ii).)

\begin{corollary}
\label{Margulis_additivity}
Let $\Gamma = \langle g_1, \ldots, g_k \rangle$ be a $C$-Schottky subgroup of type $X_0$ in~$G \ltimes V$, and $g$ and~$h$ be two nonempty cyclically reduced words in~$\Gamma$ such that $g h$ is still cyclically reduced (without canceling any letters). Then we have
\[\|M(g h) - M(g) - M(h)\| \leq \eps_{9.3}(2C).\]
\end{corollary}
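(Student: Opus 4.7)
The plan is a direct application of Proposition~9.3 of~\cite{Smi16b} with constant $2C$, which is precisely the additivity-with-error statement for a single pair of elements (the same one that underlies the induction step used to establish point~(iv) of Proposition~\ref{Schottky_properties}, as the parenthetical after that proposition indicates). The entire task is therefore to verify that the hypotheses of Proposition~9.3 are satisfied for the pair $(g, h)$, with the data inherited from the Schottky structure.

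To do this I would invoke Proposition~\ref{Schottky_properties} twice. First, applied separately to the cyclically reduced words $g$ and $h$: point~(i) yields that both $g$ and $h$ are $\rho$-regular, and point~(iii) yields that $s(g)$ and $s(h)$ are both bounded above by $s_{10.2}(C)$, so they lie in the regime where Proposition~9.3 is applicable. Second, applied to the pair $(g, h)$ itself, which is legitimate since $gh$ is cyclically reduced by hypothesis: point~(v) then asserts that $(g, h)$ is $2C$-non-degenerate, providing the remaining geometric input.

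Plugging these three pieces of information into Proposition~9.3 from~\cite{Smi16b} then yields immediately the claimed bound
\[\|M(gh) - M(g) - M(h)\| \leq \eps_{9.3}(2C).\]
There is essentially no obstacle here: all of the quantitative analysis is done upstream, inside Proposition~9.3 and inside the (stronger, inductive) Proposition~\ref{Schottky_properties}. The only real care required is bookkeeping: making sure that the constant $2C$ supplied by (v) is exactly the constant that must be fed to $\eps_{9.3}$. This matching is automatic, since the induction step that proves (iv) was set up precisely so that the one-step increment is controlled by $\eps_{9.3}(2C)$.
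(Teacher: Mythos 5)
Your proposal is correct and is essentially identical to the paper's own (one-line) derivation: the corollary is obtained by feeding the $\rho$-regularity of $g$ and $h$ (point~(i)), the contraction-strength bounds (point~(iii)), and the $2C$-non-degeneracy of the pair $(g,h)$ (point~(v), available because $gh$ is cyclically reduced) into Proposition~9.3 of~\cite{Smi16b}, which returns the additivity estimate with error $\eps_{9.3}(2C)$. The only discrepancy is that the paper's text says it plugs in points ``(i), (iii) and~(iv)''; since (iv) is the multi-letter Margulis estimate (a conclusion, not a usable hypothesis for Proposition~9.3) while (v) is the non-degeneracy input actually required, your reading appears to be the intended one and the paper's ``(iv)'' is most plausibly a typo for ``(v)''.
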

\begin{proof}
This follows immediately by plugging points (i), (iii) and~(v) into Proposition~9.3 from~\cite{Smi16b}.
\end{proof}

The notion of ``affine parabolic spaces'' that appears in the following corollary is introduced in Definition~7.13 from~\cite{Smi16b}.
\begin{corollary}
\label{different_first_letters_transverse}
Let $\Gamma = \langle g_1, \ldots, g_k \rangle$ be a $C$-Schottky subgroup of type $X_0$ in~$G \ltimes V$, and $g$ and~$h$ be two nonempty cyclically reduced words in~$\Gamma$ such that $h g$ is reduced. Then the pair of affine parabolic spaces
\[\left( A^\subgap_g,\; A^\sublap_h \right)\]
is $2C$-non-degenerate (and in particular transverse).
\end{corollary}
\begin{proof}
This follows by combining:
\begin{itemize}
\item point (ii) of the proposition applied to $g$ and~$h$,
\item and the second defining property of a Schottky tuple, \ie assumption~(H2) from Proposition~10.2 in~\cite{Smi16b};
\end{itemize}
and plugging them into Lemma~8.3 from~\cite{Smi16b}.
\end{proof}

We may use this to show that a pair of affine parabolic spaces coming from two elements of a Schottky group is always transverse, unless of course the two elements are inverse to each other (possibly up to some powers):

\begin{lemma}
\label{almost_always_transverse}
Let $\Gamma = \langle g_1, \ldots, g_k \rangle$ be a $C$-Schottky subgroup of type $X_0$ in~$G \ltimes V$, and $g$ and~$h$ be two nonempty cyclically reduced words in~$\Gamma$. Then the pair of affine parabolic spaces
\[\left( A^\subgap_g,\; A^\sublap_h \right)\]
is always transverse, unless $g$ and $h$ are of the form
\begin{equation}
\label{eq:g_h_proportional}
\begin{cases}
g = f^a \\
h = f^{-b}
\end{cases}
\end{equation}
for some $f \in \Gamma$ and some positive integers $a$ and~$b$.
\end{lemma}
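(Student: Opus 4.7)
The plan is to apply Lemma~\ref{different_first_letters_transverse} after a well-chosen simultaneous conjugation of the pair $(g,h)$; this preserves transversality, since $A^{\subgap}_{u^{-1}gu}=u^{-1}\cdot A^{\subgap}_g$ and likewise for~$h$. Using the identities $A^{\subgap}_g=A^{\sublap}_{g^{-1}}$ and $A^{\sublap}_h=A^{\subgap}_{h^{-1}}$, the claim for $(g,h)$ is equivalent to the claim for $(h^{-1},g^{-1})$, and the exceptional form~\eqref{eq:g_h_proportional} is preserved under this swap; so I will assume throughout that $l:=|g|\leq |h|=:m$, where the lengths refer to the cyclically reduced expressions $g=g_{i_1}^{\sigma_1}\cdots g_{i_l}^{\sigma_l}$ and $h=g_{j_1}^{\tau_1}\cdots g_{j_m}^{\tau_m}$.

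I first treat the case where $g$ and $h$ commute. Since $\Gamma$ is freely generated by $g_1,\ldots,g_k$ (a standard consequence of the Schottky property), any two commuting nontrivial elements share a common cyclic root, so there exist a cyclically reduced $f\in\Gamma$ and nonzero integers $a,e$ with $g=f^a$ and $h=f^e$. If $a,e$ have opposite signs, then after possibly replacing $f$ by $f^{-1}$ to ensure the appropriate sign, I recover exactly the exceptional form~\eqref{eq:g_h_proportional}. If $a,e$ have the same sign, then the pair $(A^{\subgap}_g,A^{\sublap}_h)$ equals either $(A^{\subgap}_f,A^{\sublap}_f)$ or $(A^{\sublap}_f,A^{\subgap}_f)$; this is transverse because $f$, being a nonempty cyclically reduced word in~$\Gamma$, is $\rho$-regular by Proposition~\ref{Schottky_properties}.(i), and a $\rho$-regular element has transverse attracting and repelling affine parabolic spaces.

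Suppose now that $g$ and $h$ do not commute. Let $r\in\{0,\ldots,l\}$ be the largest integer such that the last $r$ letters of $h$ coincide, in order, with the letters of $u_r^{-1}$, where $u_r:=g_{i_1}^{\sigma_1}\cdots g_{i_r}^{\sigma_r}$. If $r<l$, then the simultaneous conjugates $g^*:=u_r^{-1}gu_r$ and $h^*:=u_r^{-1}hu_r$ are both cyclically reduced (a check on boundary letters using the cyclic reducedness of $g,h$ together with the defining property of $r$), and the concatenation $h^*g^*$ is reduced by the maximality of $r$; applying Lemma~\ref{different_first_letters_transverse} to the pair $(g^*,h^*)$ gives the transversality of $(A^{\subgap}_{g^*},A^{\sublap}_{h^*})$, which transfers back to the original pair. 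If instead $r=l$, then $h$ ends with the full word $g^{-1}$; writing $h=h_0\,g^{-q}$ with $q\geq 1$ maximal such that this is a reduced product, I note that $h_0\neq 1$ (otherwise $h=g^{-q}$ would commute with~$g$, contradicting our assumption). Simultaneous conjugation by $g^q$ yields the new pair $(g,\,g^{-q}h_0)$, which one checks to be a pair of cyclically reduced words; I then claim its maximal matching is strictly less than~$l$, so that the previous subcase applies.

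The hard part is justifying this last claim, namely that $g^{-q}h_0$ does not end in $g^{-1}$. When $|h_0|\geq l$, this is immediate from the maximality of $q$. When $|h_0|<l$, a letter-by-letter comparison shows that $g^{-q}h_0$ ending in $g^{-1}$ would force, for some $k\in\{1,\ldots,a-1\}$ (with $a$ the exponent of primitivity of $g$), the equality $h_0=\phi^{-k}$ where $\phi$ is the primitive root of $g$, so that $h=\phi^{-(k+aq)}$ and both $g,h$ lie in $\langle\phi\rangle$—contradicting the non-commutativity of $g$ and $h$. This letter-count, together with the verification of cyclic reducedness at each conjugation step, is the only nontrivial bookkeeping; everything else reduces directly to Lemma~\ref{different_first_letters_transverse}.
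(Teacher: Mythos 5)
Your reduction to Lemma~\ref{different_first_letters_transverse} is fine in the commuting case and in the subcase $r<l$ (there the conjugates $u_r^{-1}gu_r$ and $u_r^{-1}hu_r$ are cyclic permutations of $g$ and~$h$, hence cyclically reduced, and maximality of~$r$ makes the product $h^*g^*$ reduced). But the subcase $r=l$ with $|h_0|<l$ contains a genuine gap: the claim that $g^{-q}h_0$ cannot end in $g^{-1}$ unless $h_0$ is a power of the primitive root of~$g$ is false. Carrying out the letter-by-letter comparison, the condition ``$g^{-q}h_0$ ends in $g^{-1}$'' forces only that (a) $h_0$ is the inverse of the prefix $u_t$ of~$g$ of length $t:=|h_0|$, and (b) $g$ is \emph{periodic} with period~$t$ in the combinatorics-on-words sense, i.e.\ $g$ is an initial segment of $u_t u_t u_t\cdots$. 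Periodicity with period~$t$ does not make $g$ a proper power unless $t$ divides~$l$. Concretely, take $g=g_1g_2g_1$ (primitive, $l=3$) and $h=g_2^{-1}g_1^{-2}g_2^{-1}g_1^{-1}$: both are cyclically reduced, they do not commute (their lengths $3$ and $5$ are coprime, so a common root would be a single generator), $h=h_0g^{-1}$ with $h_0=g_2^{-1}g_1^{-1}$ and $q=1$ maximal, and yet $g^{-1}h_0=g_1^{-1}g_2^{-1}g_1^{-1}g_2^{-1}g_1^{-1}$ still ends with $g^{-1}=g_1^{-1}g_2^{-1}g_1^{-1}$. So after your conjugation by $g^{q}$ you land back in the case $r=l$, ``the previous subcase'' does not apply, and the proof does not terminate as written.

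The argument can be repaired by iterating the conjugation and ruling out cycling (the successive conjugates $g^{-Q}hg^{Q}$ are cyclic permutations of~$h$, so a repetition would force $h$ to commute with a power of~$g$, hence with~$g$), but this is precisely the bookkeeping the paper's proof avoids. The paper compares the two equal-length words $g^{\operatorname{len}(h)}$ and $(h^{-1})^{\operatorname{len}(g)}$: either they coincide, which is exactly the exceptional case~\eqref{eq:g_h_proportional}, or they have a first discrepancy, and conjugating once by the longest common prefix~$p$ produces cyclic permutations of $g$ and~$h$ to which Lemma~\ref{different_first_letters_transverse} applies directly. Allowing $p$ to be longer than $g$ or~$h$ absorbs in one step all the periodicity phenomena that your prefix-of-$g$ accounting runs into.
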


\begin{proof}
Since $g$ and~$h$ are cyclically reduced words, we have
\[\operatorname{len}(g^n) = n\operatorname{len}(g)\]
for every nonnegative~$n$ (and similarly for~$h$). In particular the two words
\[g^{\operatorname{len}(h)} \quad\text{and}\quad (h^{-1})^{\operatorname{len}(g)}\]
have the same length, namely $\operatorname{len}(g)\operatorname{len}(h)$. (Of course instead of the product of the lengths, we could have used their least common multiple, or for that matter \emph{any} common multiple, or even infinite words formed by repeating $g$ and~$h^{-1}$ infinitely many times; the proof would still work the same way.) There are then two possible cases:
\begin{itemize}
\item Either the two words coincide: $g^{\operatorname{len}(h)} = (h^{-1})^{\operatorname{len}(g)}$. It is easy to see (basically by Euclid's GCD algorithm) that this is precisely equivalent to~\eqref{eq:g_h_proportional}.
\item Otherwise, the two words differ in at least one letter. Let then $p$~be their longest common prefix, and let~$g_i^\sigma$ (resp. $g_{i'}^{\sigma'}$) be the first letter of the word $g^{\operatorname{len}(h)}$ (resp. $(h^{-1})^{\operatorname{len}(g)}$) that follows~$p$. By assumption, we then have $(i, \sigma) \neq (i', \sigma')$.

Now consider the conjugates $p^{-1} g p$ and $p^{-1} h p$. If we write them in reduced form, we get cyclic permutations of~$g$ and~$h$ respectively (which are always cyclically reduced). Moreover, the reduced form of~$p^{-1} g p$ then starts with the letter~$g_i^\sigma$ and the reduced form of~$p^{-1} h p$ then ends with the letter~$g_{i'}^{-\sigma'}$. We may thus apply Corollary~\ref{different_first_letters_transverse} to the pair $(p^{-1} g p,\; p^{-1} h p)$, to conclude that the pair of affine parabolic spaces
\[\left( A^\subgap_{p^{-1} g p},\; A^\sublap_{p^{-1} h p} \right)\]
is transverse. Now given that the property of being transverse is invariant by the action of~$G$, we conclude that the pair
\[\left( A^\subgap_g,\; A^\sublap_h \right),\]
which is the image of the previous pair by~$p$, is transverse as well. \qedhere
\end{itemize}
\end{proof}

\section{Construction of a generalized Schottky subgroup in~$\Gamma$}
\label{sec:subgroup_construction}

We now introduce some subgroup $\Gamma \subset G \ltimes V$ whose linear part is Zariski-dense in~$G$. The remainder of this paper is dedicated to proving that its action on~$V_{\Aff}$ cannot be proper (which will prove the Main Theorem).

The goal of this section is to find, inside $\Gamma$, a $C$-Schottky subgroup of type~$X_0$, for a suitable generically symmetric and extreme vector~$X_0 \in \mathfrak{a}^+$ and constant $C \geq 1$. The actual construction will be done in subsection~\ref{sec:proper_construction}; before this, we need some preliminary work, to be done in subsection~\ref{sec:transverse_characterization}.

The results of this section hold for any representation~$\rho$ that has~$0$ as a restricted weight; we do not yet need the conditions \ref{itm:bad}, \ref{itm:split} and~\ref{itm:non_swinging} from the Main Theorem.

\subsection{Characterization of transversality of flags}
\label{sec:transverse_characterization}

The goal of this subsection is to prove Lemma~\ref{representations_transversality_condition}, which characterizes transverse pairs of flags. It encompasses Lemma~4.21~(ii) from~\cite{Smi16b}, but also provides its converse (which is what we will really need in the next subsection).

We fix, for the duration of this subsection, a vector $X \in \mathfrak{a}^+$. The result that we are going to prove holds without any additional assumption on~$X$. Also, it makes sense in a purely linear setting (\ie in the group~$G$ rather than $G \ltimes V$); so we may temporarily forget about our representation~$\rho$.

We start with the following lemma, which plays the role of Lemma~6.5 from~\cite{Smi16b} in the case where $\rho$ is replaced by the adjoint representation (so that $\Omega = \Sigma \cup \{0\}$), but $X$~is no longer assumed to be generic with respect to it (in other terms, we do not necessarily have $\Pi_X = \emptyset$). Define $\Sigma^\subg_X$, $\Sigma^\sube_X$ and~$\Sigma^\subl_X$ to be the set of restricted roots that take respectively positive, zero or negative values on~$X$.
\begin{lemma}
\label{Weyl_weak_stabilizers}
Every element $w$ of the restricted Weyl group~$W$ such that
\[w \Sigma^\subg_X \subset \Sigma^\subge_X\]
is actually an element of~$W_X$, the stabilizer of~$X$ (and in particular stabilizes both $\Sigma^\subg_X$ and~$\Sigma^\subge_X$).
\end{lemma}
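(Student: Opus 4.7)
The strategy is to prove that $w\cdot X=X$; by the classical Chevalley characterisation of point stabilisers in a finite reflection group, this yields $w\in W_X$, and the fact that $W_X$ then stabilises both $\Sigma^\subg_X$ and $\Sigma^\subge_X$ is automatic from $(w\alpha)(Z)=\alpha(w^{-1}Z)$ applied at $Z=X$.

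First I would observe that the hypothesis is symmetric in $w$ and $w^{-1}$. Since $-\Sigma^\subg_X=\Sigma^\subl_X$ and $-\Sigma^\subge_X=\Sigma^\suble_X$, the inclusion $w\Sigma^\subg_X\subset\Sigma^\subge_X$ is equivalent to $w\Sigma^\subl_X\subset\Sigma^\suble_X$, and taking complements in $\Sigma$ gives $w^{-1}\Sigma^\subg_X\subset\Sigma^\subge_X$. Via the duality identity $(w\beta)(X)=\beta(w^{-1}X)$, both forms of the hypothesis translate into
\[\alpha(w^{-1}X)\geq 0\quad\text{and}\quad\alpha(wX)\geq 0\qquad\text{for every }\alpha\in\Sigma^\subg_X.\]

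The central geometric observation is then that the closed cone
\[C:=\bigl\{Z\in\mathfrak{a}\ :\ \alpha(Z)\geq 0\text{ for every }\alpha\in\Sigma^\subg_X\bigr\}\]
coincides with the union $\bigcup_{w'\in W_X}w'\mathfrak{a}^+$ of the closed Weyl chambers containing the face of $\mathfrak{a}^+$ in which $X$ lies. For the inclusion $\supset$, one uses that $W_X$ is generated by the reflections $s_\alpha$ with $\alpha\in\Sigma^\sube_X$, and that, since $(s_\alpha\beta)(X)=\beta(X)-\langle\beta,\alpha^\vee\rangle\alpha(X)=\beta(X)$ whenever $\alpha(X)=0$, each such reflection preserves $\Sigma^\subg_X$ setwise; hence for $w'\in W_X$ and $Z=w'Z_0$ with $Z_0\in\mathfrak{a}^+$, $\alpha(Z)=(w'^{-1}\alpha)(Z_0)\geq 0$ for all $\alpha\in\Sigma^\subg_X$. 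For the inclusion $\subset$, a point $Z\in C$ has the correct sign on every $\alpha\in\Sigma^\subg_X$ and therefore lies in the closure of a chamber whose sign pattern on $\Sigma^+$ agrees with that of $\mathfrak{a}^+$ on $\Sigma^\subg_X$; these are exactly the chambers $w'\mathfrak{a}^+$ with $w'\in W_X$.

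Putting these together, $wX\in C$ implies $wX\in w'\mathfrak{a}^+$ for some $w'\in W_X$, and then $(w')^{-1}wX$ lies in $\mathfrak{a}^+\cap(W\cdot X)=\{X\}$ (since $\mathfrak{a}^+$ is a fundamental domain for $W$), so $wX=w'X=X$, using that $w'\in W_X$ already fixes~$X$. I expect the only real subtlety to be the cone description in the middle step, but it is pure Coxeter-group bookkeeping, essentially the same device that underlies Lemma~6.5 of~\cite{Smi16}.
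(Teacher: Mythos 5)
Your argument is correct in outline and, once unpacked, rests on the same mechanism as the paper's proof: translate the hypothesis into $\alpha(wX)\geq 0$ for all $\alpha\in\Sigma^\subg_X$, use an element of $W_X$ to move $wX$ into $\mathfrak{a}^+$, and invoke the fact that $\mathfrak{a}^+$ is a fundamental domain for $W$ to conclude $wX=X$ and hence $w\in\Stab_W(X)=W_X$. The initial symmetry observation (not actually needed), the inclusion $\supset$ of your cone description, and the endgame are all fine.

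The step to scrutinize is the inclusion $C\subset\bigcup_{w'\in W_X}w'\mathfrak{a}^+$. The sentence ``these are exactly the chambers $w'\mathfrak{a}^+$ with $w'\in W_X$'' is, in the direction you need, the lemma itself in disguise: a chamber $w'\mathfrak{a}^{++}$ on which every $\alpha\in\Sigma^\subg_X$ is positive satisfies $(w')^{-1}\Sigma^\subg_X\subset\Sigma^+\subset\Sigma^\subge_X$, so claiming that such a $w'$ lies in $W_X$ is claiming the conclusion of the lemma for $(w')^{-1}$. As written, this begs the question. The claim is of course a standard fact about finite reflection groups (the chambers whose closure contains $X$ correspond exactly to $\Stab_W(X)$, which equals $W_X$ by Chevalley's lemma), so a citation to Bourbaki or Humphreys would close the gap; but the paper's proof is precisely a short self-contained proof of this point, and you should reproduce it rather than assert it: by Chevalley's lemma, $W_X$ is the Weyl group of the root system $\Sigma^\sube_X$ with simple system $\Pi_X$, so some $w'\in W_X$ makes the given point of $C$ dominant for $\Pi_X$; since $W_X$ stabilizes $\Sigma^\subg_X$ (your own computation with the reflections $s_\alpha$), membership in $C$ is preserved, so the new point is also nonnegative on $\Pi\setminus\Pi_X\subset\Sigma^\subg_X$, hence dominant for all of $\Pi$. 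A second, smaller point: a point of $C$ lying on root hyperplanes belongs to the closures of several chambers, not all of which have the right sign pattern on $\Sigma^\subg_X$, so ``therefore lies in the closure of a chamber whose sign pattern agrees'' needs a perturbation into the interior of $C$ away from the root hyperplanes (or is simply subsumed by the argument just given).
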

\begin{proof}
Let $w \in W$ be an element satisfying this condition. The condition is equivalent to
\[\Sigma^\subg_X \subset w^{-1} \Sigma^\subge_X = \Sigma^\subge_{w X},\]
or in other terms
\begin{equation}
\label{eq:wX_is_X-positive}
\forall \alpha \in \Sigma^\subg_X,\quad \alpha(w X) \geq 0.
\end{equation}

Now note that $\Sigma^\sube_X$ is itself a (possibly empty) root system, whose Weyl group is precisely~$W_X$ (by Chevalley's lemma, see \eg \cite{Kna96}, Proposition~2.27) and which has $\Pi_X := \Pi \cap \Sigma^\sube_X$ as a system of positive simple roots. Hence there exists some $w' \in W_X$ such that $w' w X$ is dominant with respect to~$\Pi_X$, \ie on which every $\alpha \in \Pi_X = \Pi \cap \Sigma^\sube_X$ takes a nonnegative value.

On the other hand, since clearly $W_X$ stabilizes~$\Sigma^\subg_X$, this vector~$w' w X$ still satisfies \eqref{eq:wX_is_X-positive}; in particular every $\alpha \in \Pi \setminus \Pi_X = \Pi \cap \Sigma^\subg_X$ also takes a nonnegative value on~$w' w X$. We conclude that $w' w X$~is dominant with respect to all of~$\Pi$, \ie that $w' w X \in \mathfrak{a}^+$.

But every Weyl orbit intersects $\mathfrak{a}^+$ at exactly one point, so we actually have $w' w X = X$. Since by construction $w' \in W_X$, we conclude that $w \in W_X$ as well.
\end{proof}

We may now characterize $G$-orbits of pairs of flags in terms of the Bruhat decomposition of a suitable map. Recall Definition~2.16 from \cite{Smi16b} for the definition of the parabolic subgroups~$P^\pm_X$ and their algebras.

\begin{lemma}
\label{Bruhat_transversality_criterion}
Let $\phi_1, \phi_2 \in G$; consider the pair of cosets $(\phi_1 P^+_X, \phi_2 P^-_X) \in G/P^+_X \times G/P^-_X$. Then:
\begin{hypothenum}
\item its $G$-orbit depends only on the ``Bruhat projection''~$w$ of the map $\phi_1^{-1} \phi_2 w_0$, which is defined to be the unique element $w \in W$ such that
\[\phi_1^{-1} \phi_2 w_0 \in P^+ w P^+.\]
\item this pair lies in the same $G$-orbit as the pair $(P^+_X, P^-_X)$ if and only if this Bruhat projection~$w$ satisfies
\[w w_0 \in W_X.\]
\end{hypothenum}
\end{lemma}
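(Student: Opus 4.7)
The plan is to reduce the question to the standard Bruhat decomposition of $G$ with respect to two standard parabolic subgroups. First, by acting diagonally with $\phi_1^{-1} \in G$, the $G$-orbit of $(\phi_1 P^+_X, \phi_2 P^-_X)$ is captured by the double coset of $\phi_1^{-1}\phi_2$ in $P^+_X \backslash G / P^-_X$. The awkwardness is that $P^-_X$ is not a standard parabolic with respect to the fixed Borel $P^+$; I would remedy this by right-multiplying by a representative $\tilde{w}_0 \in N_G(A)$, using the identity
\[
\tilde{w}_0^{-1}\, P^-_X\, \tilde{w}_0 \;=\; P^+_{-w_0(X)},
\]
which is a routine root-theoretic check: $\Ad(\tilde{w}_0^{-1})$ sends $\mathfrak{g}^\alpha$ to $\mathfrak{g}^{w_0(\alpha)}$ and $\mathfrak{l}_X$ to $\mathfrak{l}_{-w_0(X)}$, bijecting $\{\alpha \in \Sigma : \alpha(X) < 0\}$ onto $\{\beta : \beta(-w_0(X)) > 0\}$. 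Since $-w_0(X) \in \mathfrak{a}^+$, the subgroup $P^+_{-w_0(X)}$ is genuinely a standard parabolic containing $P^+$.

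The bijection $g \mapsto g\tilde{w}_0$ then identifies $P^+_X \backslash G / P^-_X$ with $P^+_X \backslash G / P^+_{-w_0(X)}$, and the classical refinement of the Bruhat decomposition for standard parabolic pairs yields a bijection
\[
P^+_X \backslash G / P^+_{-w_0(X)} \;\cong\; W_X \backslash W / W_{-w_0(X)},
\]
sending the double coset of $g \in P^+ w P^+$ to the class $[w]$. Applied to $g = \phi_1^{-1}\phi_2 \tilde{w}_0$, this is precisely part~(i): the orbit depends only on $[w] \in W_X \backslash W / W_{-w_0(X)}$, and a fortiori only on the Bruhat projection $w$ itself.

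For part~(ii), the reference pair $(P^+_X, P^-_X)$ corresponds to $\phi_1 = \phi_2 = e$, so the associated element is $\tilde{w}_0 \in P^+ w_0 P^+$, whose Bruhat projection is $w_0$. I would then use $W_{-w_0(X)} = w_0 W_X w_0^{-1}$ (since $s \in W$ fixes $-w_0(X)$ iff $w_0^{-1} s w_0$ fixes $X$) together with $w_0^2 = e$ in $W$ to compute
\[
W_X \, w_0 \, W_{-w_0(X)} \;=\; W_X w_0 \cdot w_0 W_X w_0 \;=\; W_X w_0.
\]
The orbit-coincidence condition $W_X w W_{-w_0(X)} = W_X w_0 W_{-w_0(X)}$ thus reduces to $w \in W_X w_0$, which rearranges to $w w_0 \in W_X$ (using $w_0^{-1} = w_0$ in $W$).

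The only non-routine ingredient is the conjugation identity $\tilde{w}_0^{-1} P^-_X \tilde{w}_0 = P^+_{-w_0(X)}$, and this is elementary but worth verifying carefully. Every other step is standard Bruhat-theoretic bookkeeping, so I do not anticipate a serious obstacle.
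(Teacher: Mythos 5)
Your proof is correct, but it reaches the conclusion by a genuinely different route from the paper, and the difference is worth noting. Both arguments begin the same way: set $Y=-w_0(X)$, use the conjugation $\tilde w_0^{-1}P_X^-\tilde w_0 = P_Y^+$ (in the paper, the Lie-algebra version $\mathfrak p_X^- = \Ad_{w_0}\mathfrak p_Y^+$) to replace the opposite parabolic by a standard one, and reduce everything to the Bruhat cell of $\phi_1^{-1}\phi_2 w_0$. The divergence is in the hard direction of part (ii). You invoke the classical refinement of the Bruhat decomposition for a Tits system, $P_I\backslash G/P_J\cong W_I\backslash W/W_J$, whose injectivity is precisely the nontrivial input, and then finish with the clean Weyl-group computation $W_Xw_0W_{-w_0(X)}=W_Xw_0$. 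The paper instead uses only the coarse decomposition $G=\bigsqcup_w P^+wP^+$ and proves by hand the one instance of injectivity it needs: it observes that lying in the orbit of $(P_X^+,P_X^-)$ forces transversality $\mathfrak p_X^+ + \Ad_w\mathfrak p_Y^+=\mathfrak g$, translates this into the root-theoretic condition $ww_0\Sigma_X^\subg\subset\Sigma_X^\subge$, and concludes via Lemma~\ref{Weyl_weak_stabilizers}. Your version is shorter and leans on standard machinery (you should make sure the identification $P_X^+=P^+W_XP^+$ as a standard parabolic of the Tits system is available in your references); the paper's version is more self-contained and isolates the combinatorial content in a reusable lemma. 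Either is acceptable.
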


\begin{proof}
For shortness' sake, we introduce the notation $Y := -w_0(X)$. (In subsequent applications, the vector~$X$ will actually be symmetric, meaning that $Y = X$).
\begin{hypothenum}
\item Since $P^+_X$ is by definition the normalizer of~$\mathfrak{p}^+_X$ (in the adjoint representation), the coset $\phi P^+_X$ is entirely determined by the conjugacy class $\Ad_{\phi} \mathfrak{p}_X^+$ and vice-versa:
\[\forall \phi, \phi' \in G,\qquad \phi P^+_X = \phi' P^+_X \iff \Ad_{\phi} \mathfrak{p}_X^+ = \Ad_{\phi'} \mathfrak{p}_X^+.\]
Of course the same statement holds for $P^-_X$ and $\mathfrak{p}^-_X$. So the question can be restated as follows: we need to prove that the $\Ad_G$-orbit
\[\Ad_G \cdot \left( \Ad_{\phi_1} \mathfrak{p}_X^+,\; \Ad_{\phi_2} \mathfrak{p}_X^- \right)\]
depends only on~$w$. Now noting that $\mathfrak{p}_X^- = \Ad_{w_0} \mathfrak{p}_Y^+$, and multiplying everything by $\phi_1^{-1}$, this orbit is in fact equal to the orbit
\[\Ad_G \cdot \left( \mathfrak{p}_X^+,\; \Ad_{\phi^{-1} \phi_2 w_0} \mathfrak{p}_Y^+ \right).\]
Now by definition of~$w$, we can find some $p_1, p_2 \in P^+$ such that $\phi^{-1} \phi_2 w_0 = p_1 w p_2$. Let us multiply everything by~$p_1^{-1}$. The elements $p_1^{-1}$ and~$p_2$ are by assumption in~$P^+$, which is a subset of both $P_X^+$ and~$P_Y^+$ and in particular normalizes both of their Lie algebras. So both rightmost factors can be absorbed into $\mathfrak{p}_X^+$ and $\mathfrak{p}_Y^+$ respectively, and we see that our orbit is the same as the orbit
\[\Ad_G \cdot \left( \mathfrak{p}_X^+,\; \Ad_{w} \mathfrak{p}_Y^+ \right),\]
and indeed depends only on~$w$.
\item By (i), it is enough to show that, for every $w \in W$, the pair $(P_X^+, w w_0 P_X^-)$ is in the same $G$-orbit as the pair $(P_X^+, P_X^-)$ if and only if $w w_0 \in W_X$. Applying once again the reductions made in the proof of~(i), this can be further reduced to showing that, for every $w \in W$, we have
\[\left( \mathfrak{p}_X^+,\; \Ad_w \mathfrak{p}_Y^+ \right) \in \Ad_G \cdot \left( \mathfrak{p}_X^+,\; \mathfrak{p}_X^- \right)
\quad \iff \quad w w_0 \in W_X.\]
Indeed:
\begin{itemize}
\item Assume first that $w w_0 \in W_X$. This means in particular that $w w_0$ stabilizes $\mathfrak{p}_X^-$, so that we actually have
\[\left( \mathfrak{p}_X^+,\; \Ad_w \mathfrak{p}_Y^+ \right) = \left( \mathfrak{p}_X^+,\; \mathfrak{p}_X^- \right);\]
of course the orbits are then equal as well.
\item Conversely, assume that $\left( \mathfrak{p}_X^+,\; \Ad_w \mathfrak{p}_Y^+ \right) \in \Ad_G \cdot \left( \mathfrak{p}_X^+,\; \mathfrak{p}_X^- \right)$. This means in particular that the pair $\left( \mathfrak{p}_X^+,\; \Ad_w \mathfrak{p}_Y^+ \right)$ is transverse, in the sense that
\[\mathfrak{p}_X^+ + \Ad_w \mathfrak{p}_Y^+ = \mathfrak{g}.\]
This is equivalent to
\[\Sigma_X^\subge \cup w \Sigma_Y^\subge = \Sigma,\]
which in turn is equivalent to
\[w w_0 \Sigma_X^\subg \subset \Sigma_X^\subge\]
(compare this with~(7.7) in~\cite{Smi16b}). We conclude by Lemma~\ref{Weyl_weak_stabilizers}.
\qedhere
\end{itemize}
\end{hypothenum}
\end{proof}

Recall that the proof of Lemma~4.21 in~\cite{Smi16b} relied on a ``model'' attracting line and repelling hyperplane in each of the ``reference'' representations; let us now give a notation to these pairs of spaces.

\begin{definition}
Let $i \in \Pi$, and let $(\rho_i, V_i)$ be one of the representations of~$G$ introduced in Proposition~2.12 from~\cite{Smi16b}. We call $V^s_{i, 0}$ the highest restricted weight space of~$V_i$, and $V^u_{i, 0}$ its natural complement:
\[V^s_{i, 0} := V_i^{n_i \varpi_i};\]
\[V^u_{i, 0} := \bigoplus_{\lambda \neq n_i \varpi_i} V_i^\lambda.\]
\end{definition}

We are now ready to state, and prove, the announced lemma.

\begin{lemma}
\label{representations_transversality_condition}
For any $\phi_1, \phi_2 \in G$, the pair $(\phi_1 P^+_X, \phi_2 P^-_X) \in G/P^+_X \times G/P^-_X$ is transverse if and only if, for every $i \in \Pi \setminus \Pi_X$, we have $\phi_1 V^s_{i, 0} \not\in \phi_2 V^u_{i, 0}$.
\end{lemma}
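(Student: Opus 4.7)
The plan is to translate both sides of the equivalence into conditions on the single Weyl element $w \in W$ characterized by $\phi_1^{-1}\phi_2 w_0 \in P^+ w P^+$, and then check that the two translations produce the same statement about $w$. For the left-hand side, Lemma~\ref{Bruhat_transversality_criterion}(ii) directly gives that transversality is equivalent to $w w_0 \in W_X$. I would then prove the combinatorial reformulation: $w w_0 \in W_X$ if and only if $w w_0(\varpi_i) = \varpi_i$ for every $i \in \Pi \setminus \Pi_X$. The ``only if'' direction is immediate, as $W_X$ is generated by the simple reflections $s_{\alpha_j}$ with $j \in \Pi_X$, and $s_{\alpha_j}(\varpi_i) = \varpi_i$ whenever $i \ne j$. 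For the converse, the duality $\langle \varpi_i, \alpha_j^\vee \rangle = \delta_{ij}$ shows that $\{\varpi_i : i \in \Pi \setminus \Pi_X\}$ is a basis of the orthogonal complement of $\Span(\Pi_X)$ in $\mathfrak{a}^*$, so any element of $W$ fixing each such $\varpi_i$ fixes this whole subspace pointwise; by Chevalley's lemma it then lies in the subgroup of $W$ generated by reflections through roots vanishing on it, namely the roots in $\Span(\Pi_X) \cap \Sigma = \Sigma^\sube_X$, i.e.\ in $W_X$.

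For the right-hand side, write $\phi_1^{-1}\phi_2 w_0 = p_1 \tilde w p_2$ with $p_1, p_2 \in P^+$ and $\tilde w$ a representative of $w$, so that $\phi_1^{-1}\phi_2 = p_1 \tilde w p_2 w_0^{-1}$. Two observations about the reference representation $V_i$ will absorb $p_1$ and $p_2$: first, $P^+ = L N^+$ stabilises the highest restricted weight line $V^s_{i, 0}$, since $N^+$ annihilates its generator and $L$ merely rescales it; second, $w_0^{-1} V^u_{i, 0}$ equals $\bigoplus_{\mu \ne \nu_i} V_i^\mu$ where $\nu_i = w_0(n_i \varpi_i)$ is the lowest weight of $V_i$, and this subspace is also $P^+$-stable, because for any weight $\mu \ne \nu_i$ of $V_i$ and any positive root $\beta$ the combination $\mu + \beta$ cannot equal $\nu_i$ (that would force $\mu = \nu_i - \beta$ to be a weight strictly below $\nu_i$, contradicting its minimality), while $L$ preserves each weight space. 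Hence the condition $\phi_1 V^s_{i, 0} \not\subset \phi_2 V^u_{i, 0}$, equivalently $V^s_{i, 0} \not\subset \phi_1^{-1}\phi_2 V^u_{i, 0}$, reduces to $V^s_{i, 0} \not\subset \tilde w \cdot w_0^{-1} V^u_{i, 0}$. Since $\tilde w$ permutes weight spaces via $w$, this last subspace equals $\bigoplus_{\lambda \ne w w_0(n_i \varpi_i)} V_i^\lambda$; as $V^s_{i, 0} = V_i^{n_i \varpi_i}$ and the weight-space decomposition is direct, the condition is equivalent to $w w_0(n_i \varpi_i) = n_i \varpi_i$, i.e.\ $w w_0(\varpi_i) = \varpi_i$. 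Combining with the translation of the left-hand side completes the proof.

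The main obstacle is the representation-theoretic bookkeeping in the second paragraph: carefully justifying the $P^+$-stability of both $V^s_{i, 0}$ and $w_0^{-1} V^u_{i, 0}$ in $V_i$, and observing that different representatives of $w$ (which differ by elements of $Z_G(A) = L \subset P^+$, acting as scalars on each weight space) yield the same result. Everything else reduces to standard root-system combinatorics and an application of Chevalley's lemma.
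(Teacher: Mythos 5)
Your proof is correct and follows essentially the same route as the paper: both reduce the two sides of the equivalence to conditions on the Bruhat projection $w$ via Lemma~\ref{Bruhat_transversality_criterion}, translate the line-versus-hyperplane condition into $w w_0$ fixing $n_i \varpi_i$, and conclude via $W_X = \bigcap_{i \in \Pi \setminus \Pi_X} W_{\varpi_i}$. The only difference is one of detail: the paper simply cites this last identity from (4.13) in~\cite{Smi16b} and states the weight-space reformulation without proof, whereas you supply the Chevalley-lemma argument for the former and the $P^+$-stability bookkeeping for the latter.
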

Of course it is understood here that $\phi_1$ and~$\phi_2$ act on~$V_i$ by~$\rho_i$.
\begin{proof}
Let $\phi_1, \phi_2 \in G$, and let $w$~be the Bruhat projection of the map $\phi_1^{-1} \phi_2 w_0$. By Lemma~\ref{Bruhat_transversality_criterion}, the pair~$(\phi_1 P^+_X, \phi_2 P^-_X)$ is transverse if and only if $w w_0 \in W_X$.

On the other hand, for every $i \in \Pi \setminus \Pi_X$, the condition
\[\phi_1 V^s_{i, 0} \not\in \phi_2 V^u_{i, 0}\]
is equivalent to
\[n_i \varpi_i \not\in w \Big( \Omega_i \setminus \{w_0(n_i \varpi_i)\} \Big)\]
(where $\Omega_i$, as in~\cite{Smi16b}, denotes the set of restricted weights of~$\rho_i$). This happens if and only if $w w_0$ fixes~$n_i \varpi_i$, \ie if and only if $w w_0 \in W_{\varpi_i}$.

Finally, we note that
\[W_X = \bigcap_{i \in \Pi \setminus \Pi_X} W_{\varpi_i}\]
(see (4.13) in~\cite{Smi16b}). The conclusion follows.
\end{proof}

\subsection{Construction of the subgroup}
\label{sec:proper_construction}

We are now ready to construct a generalized Schottky subgroup in~$\Gamma$. Our first step is to find an appropriate vector~$X_0$ (that we will fix for the remainder of the paper).

\begin{proposition}
There exists some generically symmetric and extreme vector $X_0 \in \mathfrak{a}^+$ such that $\Gamma$ contains at least one element~$g$ compatible with~$X_0$. 
\end{proposition}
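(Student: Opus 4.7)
The plan is to split the task into two stages: first choose a suitable vector $X_0 \in \mathfrak{a}^+$, then produce the element $g \in \Gamma$ compatible with it, exploiting the Zariski-density of the linear part $\ell(\Gamma)$ in~$G$.

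For the first stage, I would consider the linear subspace $\mathfrak{a}^{\mathrm{sym}} := \ker(\mathrm{Id} + w_0) \subset \mathfrak{a}$ of symmetric vectors. Its intersection with $\mathfrak{a}^+$ is a closed convex cone with nonempty relative interior (for instance, any element of the form $X - w_0(X)$ with $X$ in the interior of $\mathfrak{a}^+$ lies there). The conditions ``extreme'' and ``generic'' as used in~\cite{Smi16b} carve out a Zariski-open subset of this cone: ``extreme'' asks that $X_0$ lie in the appropriate face defined by the simple roots vanishing on it (\ie in the relative interior of a face of $\mathfrak{a}^+$), while ``generic'' asks that $X_0$ avoid a finite union of hyperplanes defined by differences of restricted weights of~$\rho$ and other data attached to~$\rho$. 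Both conditions are stable under small perturbations and together define a dense open subset of the relative interior of~$\mathfrak{a}^{\mathrm{sym}} \cap \mathfrak{a}^+$.

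For the second stage I invoke Benoist's theorem on limit cones: since $\ell(\Gamma)$ is Zariski-dense in the semisimple Lie group~$G$, its limit cone $\mathscr{L}_\Gamma \subset \mathfrak{a}^+$ (the closed cone spanned by the Jordan projections of elements of $\ell(\Gamma)$) has nonempty interior, and the Jordan projections of elements of~$\Gamma$ are dense (after projectivization) in~$\mathscr{L}_\Gamma$. Crucially, because $\Gamma$ is a group, $\jordan(\gamma^{-1}) = -w_0(\jordan(\gamma))$, so $\mathscr{L}_\Gamma$ is stable under the opposition involution $-w_0$, and therefore $\mathscr{L}_\Gamma \cap \mathfrak{a}^{\mathrm{sym}}$ has nonempty relative interior inside $\mathfrak{a}^{\mathrm{sym}}$. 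Intersecting this with the open set of ``generically symmetric and extreme'' vectors constructed above yields a nonempty open set; pick $X_0$ inside it.

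By density of Jordan projections in~$\mathscr{L}_\Gamma$, I can then find $\gamma \in \Gamma$ whose Jordan projection is arbitrarily close in direction to~$X_0$. The compatibility condition of~\cite{Smi16b} for an element $g = \gamma$ and a vector $X_0$ amounts to $\gamma$ being $\rho$-regular with Jordan projection lying in the face of $\mathfrak{a}^+$ picked out by $X_0$; this is an open condition on the Jordan projection, so it is satisfied once the approximation is close enough. The main obstacle is verifying that the ``extreme'' and ``compatible'' conditions from~\cite{Smi16b} really are open in the direction of the Jordan projection, and that they are compatible with the symmetry constraint $-w_0(X_0) = X_0$; both reduce to routine checks using the $-w_0$-stability of $\mathscr{L}_\Gamma$ and the openness of $\rho$-regularity.
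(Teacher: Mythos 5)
Your second stage --- Benoist's theorem, the $-w_0$-stability of the limit cone $\ell_\Gamma$ coming from $\jordan(\gamma^{-1}) = -w_0(\jordan(\gamma))$, and the openness of the compatibility condition --- is essentially the paper's argument. The gap is in your first stage, in the treatment of the word \emph{extreme}. You assert that ``extreme'' is stable under small perturbations and that, together with genericity, it cuts out a dense open subset of the relative interior of $\mathfrak{a}^{\mathrm{sym}} \cap \mathfrak{a}^+$, and you then look for $X_0$ inside $\ell_\Gamma$ itself. But extremeness in the sense of~\cite{Smi16} is not an open condition: an extreme vector is a generic vector sitting in the \emph{smallest} face of the Weyl chamber compatible with its type, so extreme vectors of a given type form a set of positive codimension in general (contained in a proper face of $\mathfrak{a}^+$, \ie with $\Pi_{X_0} \neq \emptyset$ --- note that Section~\ref{sec:transverse_characterization} of the paper is written precisely to handle this case), and there is no reason for that set to meet $\ell_\Gamma$ at all, let alone densely. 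Your gloss of ``extreme'' as ``lying in the relative interior of a face of $\mathfrak{a}^+$'' is vacuous, since every vector does that. This is exactly why the paper does \emph{not} try to place the extreme vector in the limit cone: it first finds a generically symmetric $X'_0$ in the relative interior of $\ell_\Gamma \cap \mathfrak{a}^{\mathrm{sym}}$, and only then invokes Proposition~5.12 of~\cite{Smi16} to produce an extreme vector $X_0$ \emph{of the same type}, which may well lie outside $\ell_\Gamma$.

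A related slip is your description of compatibility as ``the Jordan projection lying in the face of $\mathfrak{a}^+$ picked out by $X_0$'': if $X_0$ lies in a proper face, that would be a closed, positive-codimension condition, and your approximation argument would then fail; you cannot simultaneously claim it is open. The correct statement (Remark~6.15 in~\cite{Smi16b}) is that the set $\mathfrak{a}'_{\rho, X_0}$ of vectors compatible with $X_0$ is an \emph{open convex cone} containing every vector of the same type as $X_0$, in particular $X'_0$. Since $X'_0 \in \ell_\Gamma$, some $g \in \Gamma$ has Jordan projection whose direction is close enough to $X'_0$ to land in $\mathfrak{a}'_{\rho, X_0}$; that $g$ is compatible with $X_0$ even though $\jordan(g)$ may be nowhere near $X_0$. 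Once you replace your density claim by the passage $X'_0 \rightsquigarrow X_0$ via Proposition~5.12 and aim the approximation at $X'_0$ rather than at $X_0$, the argument becomes the paper's proof.
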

\begin{proof}
Define the \emph{limit cone} of $\Gamma$ (denoted by $\ell_\Gamma$; not to be confused with $\ell(\Gamma)$, which is the linear part of~$\Gamma$) to be the smallest closed cone in $\mathfrak{a}^+$ containing the Jordan projections of all the elements of~$\Gamma$. (The Jordan projection, also known as the Lyapunov projection, is the map $\jordan: G \to \mathfrak{a}^+$ given by Definition~2.3 in~\cite{Smi16b}, or equivalently the map $\log \lambda$ in the notations of~\cite{Ben97}.) Then Theorem~1.2.a.$\beta$ in~\cite{Ben97} says that $\ell_\Gamma$ is convex and has nonempty interior. In particular its intersection with the $(-w_0)$-invariant subspace of $\mathfrak{a}^+$ also has nonempty interior: hence it contains at least one generically symmetric vector $X'_0$. Applying Proposition~5.13 from~\cite{Smi16b}, we can then find an extreme vector~$X_0$ of the same type as~$X'_0$.

Now consider the set $\mathfrak{a}'_{\rho, X_0}$ of vectors in~$\mathfrak{a}$ compatible with~$X_0$, introduced in Remark~6.15 in~\cite{Smi16b}. This set is an open convex cone containing~$X'_0$. Now we have $X'_0 \in \ell_\Gamma$, which means by definition that $\Gamma$ contains elements whose Jordan projections have direction arbitrarily close to~$X'_0$. In particular $\Gamma$ must contain some element~$g$ whose Jordan projection is in~$\mathfrak{a}'_{\rho, X_0}$, \ie which is compatible with~$X_0$.
\end{proof}

We fix this value of~$X_0$ for the remainder of the paper. Let us now find a \emph{second} element~$h \in \Gamma$, also compatible with~$X_0$, and ``in general position'' with respect to~$g$:

\begin{proposition}
\label{construction_g_h}
Let $g \in \Gamma$~be an element compatible with~$X_0$. Then there exists another element $h \in \Gamma$ compatible with~$X_0$ such that both attracting flags $y^{X_0, +}_h$ and $y^{X_0, +}_{h^{-1}}$ are transverse to both repelling flags $y^{X_0, -}_g$ and $y^{X_0, -}_{g^{-1}}$.
\end{proposition}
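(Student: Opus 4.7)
The plan is to produce $h$ in the form $h = h_0\, g^N\, h_1$ with $h_0, h_1 \in \Gamma$ well chosen and $N$ a sufficiently large positive integer. Two standard facts about products involving a high power of an $X_0$-loxodromic element $g$ motivate this shape. First, the Jordan projection of $h_0\, g^N\, h_1$ lies in an arbitrarily small cone around a positive multiple of $\jordan(g)$ for $N$ large, so it stays inside the open cone $\mathfrak{a}'_{\rho, X_0}$ (which contains $\jordan(g)$ by the choice of $g$); hence $h_0\, g^N\, h_1$ is automatically compatible with $X_0$ for $N$ large enough. Second, by the north--south dynamics of $g^N$ on the flag variety $G/P^+_{X_0}$, the attracting flag $y^{X_0,+}_{h_0 g^N h_1}$ converges, as $N \to \infty$, to $h_0\, y^{X_0,+}_g$, and $y^{X_0,+}_{(h_0 g^N h_1)^{-1}}$ converges to $h_1^{-1}\, y^{X_0,+}_{g^{-1}}$, provided an auxiliary transversality (keeping the orbit away from the relevant repelling flag) holds.

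Next I would translate the four required transversality conditions into Zariski-open constraints on the limit flags. By Lemma~\ref{representations_transversality_condition}, transversality of a pair of flags is cut out by finitely many non-incidence conditions in the auxiliary representations $(\rho_i, V_i)$, so it defines a Zariski-open condition on the pair. Pulling it back along the orbit maps $h_0 \mapsto h_0\, y^{X_0,+}_g$ and $h_1 \mapsto h_1^{-1}\, y^{X_0,+}_{g^{-1}}$, one obtains a non-empty Zariski-open subset $U \subset G \times G$ consisting of pairs $(h_0, h_1)$ such that:
\begin{itemize}
\item $h_0\, y^{X_0,+}_g$ is transverse to both $y^{X_0,-}_g$ and $y^{X_0,-}_{g^{-1}}$;
\item $h_1^{-1}\, y^{X_0,+}_{g^{-1}}$ is transverse to both $y^{X_0,-}_g$ and $y^{X_0,-}_{g^{-1}}$;
\item the auxiliary transversality required for the dynamical convergence above holds.
\end{itemize}

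Since the linear part of $\Gamma$ is Zariski-dense in $G$, the product $\Gamma \times \Gamma$ is Zariski-dense in $G \times G$, and I can pick $(h_0, h_1) \in U \cap (\Gamma \times \Gamma)$. Setting $h := h_0\, g^N\, h_1$ for $N$ large, the attracting flags of $h$ and of $h^{-1}$ land in the prescribed transversality locus by continuity from their limits, while compatibility with $X_0$ follows from the Jordan projection statement. The main technical step, and the only non-formal input, is the joint convergence of the attracting flags and of the direction of the Jordan projection under high powers of a loxodromic factor; both are classical facts, and the precise form needed here is essentially the one already used throughout~\cite{Smi16b} in the construction of Schottky families (and can also be traced to Benoist's work on Zariski-dense subgroups of semisimple Lie groups).
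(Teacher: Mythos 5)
Your proposal is correct in substance, but it follows a genuinely different route from the paper. The paper simply takes $h = \phi g \phi^{-1}$ for a single well-chosen $\phi \in \Gamma$: since the Jordan projection is a conjugacy invariant, compatibility of $h$ with $X_0$ is then automatic, and since the flags transform equivariantly ($y^{X_0,\pm}_{\phi g^{\pm 1}\phi^{-1}} = \phi\, y^{X_0,\pm}_{g^{\pm 1}}$), the four transversality requirements become four Zariski-open conditions on $\phi$ alone, each shown nonempty (two of them via the Bruhat criterion of Lemma~\ref{Bruhat_transversality_criterion}) and then met simultaneously by Zariski-density of $\Gamma$. Your construction $h = h_0 g^N h_1$ also works, and it has the advantage that the four conditions decouple ($h_0$ controls $y^{X_0,+}_h$, $h_1$ controls $y^{X_0,+}_{h^{-1}}$), so nonemptiness of each open condition is just density of the big Bruhat cell rather than anything requiring the Bruhat computation. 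The price is that you must invoke two quantitative dynamical facts that the conjugation trick renders unnecessary: the convergence of the attracting flags of $h_0 g^N h_1$ and of its inverse to $h_0 y^{X_0,+}_g$ and $h_1^{-1} y^{X_0,+}_{g^{-1}}$, and the convergence of the direction of $\jordan(h_0 g^N h_1) = \jordan(g^N h_1 h_0)$ to that of $\jordan(g)$ so as to land in the open cone $\mathfrak{a}'_{\rho, X_0}$. Both are indeed classical (Benoist; and they are essentially contained in the Schottky estimates of~\cite{Smi16b}), but note that the second one requires comparing the Jordan and Cartan projections of $g^N h_1 h_0$, which is itself contingent on the auxiliary transversality you mention --- so your ``only non-formal input'' is doing real work, whereas in the paper's proof the analogous step is a tautology. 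If you keep your approach, you should at least state explicitly why $U$ is nonempty (preimage of dense open transversality loci under surjective orbit maps) and cite precisely which estimate gives $\jordan(g^N f) = N\jordan(g) + O(1)$.
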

(See Definition~4.3 in~\cite{Smi16b} for the definitions of the attracting and repelling flags. They make sense when $g$ and~$h$ are $X_0$-regular; but we know that $g$ and~$h$ are compatible with~$X_0$, which, by Proposition~6.16~(i) in~\cite{Smi16b}, is a stronger property.)
\begin{proof}
We find $h$ as the conjugate of~$g$ by some element $\phi \in \Gamma$; this automatically ensures that $h$~is still compatible with~$X_0$. The transversality conditions are then satisfied if and only if the element~$\phi$ is contained in each of the following four sets:
\[U^{++} := \setsuch{\phi \in G}{\phi \left( y^{X_0, +}_{g} \right) \text{ transverse to } y^{X_0, -}_{g}};\]
\[U^{+-} := \setsuch{\phi \in G}{\phi \left( y^{X_0, +}_{g} \right) \text{ transverse to } y^{X_0, -}_{g^{-1}}};\]
\[U^{-+} := \setsuch{\phi \in G}{\phi \left( y^{X_0, +}_{g^{-1}} \right) \text{ transverse to } y^{X_0, -}_{g}};\]
\[U^{--} := \setsuch{\phi \in G}{\phi \left( y^{X_0, +}_{g^{-1}} \right) \text{ transverse to } y^{X_0, -}_{g^{-1}}}.\]
It remains to show that $\Gamma \cap U^{++} \cap U^{+-} \cap U^{-+} \cap U^{--} \neq \emptyset$. Since $\Gamma$~is by assumption Zariski-dense, the result will follow if we can prove that all four sets $U^{++}$, $U^{+-}$, $U^{-+}$, $U^{--}$ are Zariski-open and nonempty.

From Lemma~\ref{representations_transversality_condition}, it follows (using Lemma~4.21 from~\cite{Smi16b}) that we have
\[U^{++} = \bigcap_{i \in \Pi \setminus \Pi_X} \rho_i^{-1} \setsuch{f \in \GL(V_i)}{f E^s_{\rho_i(g)} \not\in E^u_{\rho_i(g)}}.\]
and similarly for the other three sets. Now each set $\setsuch{f \in \GL(V_i)}{f E^s_{\rho_i(g)} \not\in E^u_{\rho_i(g^{-1})}}$ is just the (set-theoretical) complement of a hyperplane, hence Zariski-open. Since the representations $\rho_i$ are algebraic, it follows that $U^{++}$ (and similary $U^{+-}$, $U^{-+}$ and $U^{--}$) is Zariski-open.

Now the sets $U^{++}$ and $U^{--}$ are obviously nonempty, as they contain the identity. For $U^{+-}$ and~$U^{-+}$, it is easy to show nonemptiness by using the transversality criterion of Lemma~\ref{Bruhat_transversality_criterion}.
\end{proof}

This construction ensures the following property:
\begin{lemma}
\label{pairwise_C_non_deg}
Let $g$ and~$h$ be two elements compatible with~$X_0$ and satisfying the transversality conditions of Proposition~\ref{construction_g_h}. Then there exists a finite constant $C \geq 1$ such that for any two (not necessarily distinct) elements $f_1, f_2 \in \{g, g^{-1}, h, h^{-1}\}$ such that $f_1 \neq f_2^{-1}$, the pair of affine parabolic spaces
\[\left( A^\subgap_{f_1},\; A^\sublap_{f_2} \right)\]
is $C$-non-degenerate.
\end{lemma}
\begin{proof}
Let $f_1$ and~$f_2$ be two such elements. Then the pair of flags
\[\left( y^{X_0, +}_{f_1},\; y^{X_0, -}_{f_2} \right)\]
is transverse: indeed for $f_1 \neq f_2$ this is true by assumption, and for $f_1 = f_2$ this follows automatically simply because $f_{1 \text{ or } 2}$ is $X_0$-regular. By Remark~7.15 from~\cite{Smi16b}, this is equivalent to saying that for any such $f_1, f_2$, the pair
\[\left( V^\subgap_{f_1},\; V^\sublap_{f_2} \right)\]
is transverse. This, in turn, is obviously equivalent to the pair
\[\left( A^\subgap_{f_1},\; A^\sublap_{f_2} \right)\]
being transverse. Now clearly, any transverse pair of affine parabolic spaces is $C$-non-degenerate for some finite $C \geq 1$; so it suffices to take the largest among these constants~$C$.
\end{proof}

We may now finally construct the desired subgroup.

\begin{definition}~
\begin{itemize}
\item For the rest of the paper, we fix two elements $g$ and $h$ compatible with~$X_0$ and satisfying the transversality conditions of Proposition~\ref{construction_g_h}.
\item We also fix a value $C \geq 1$ satisfying Lemma~\ref{pairwise_C_non_deg}.
\item Finally we call $\Gamma'$ the group generated by $g^N$ and~$h^N$, where $N$~is a positive integer large enough that the pair $(g^N, h^N)$ is $C$-Schottky of type~$X_0$. Concretely, we need to choose $N$ such that for every $f \in \{g, g^{-1}, h, h^{-1}\}$, we have
\[s_{X_0}\left( f^N \right) \leq s_{10.2}(C)\]
(where $s_{10.2}(C)$~is the constant introduced in Proposition~10.2 from~\cite{Smi16b}). This is made possible by Proposition~7.23.(ii) from~\cite{Smi16b}, which ensures that the left-hand side tends to~$0$ as $N$ tends to infinity.
\end{itemize}
\end{definition}

\section{Construction of a sequence with bounded Margulis invariants}
\label{sec:bounded_margulis}

The goal of this section is to prove Proposition~\ref{infinitely_many_bounded_invariants}, which gives, in this generalized Schottky group~$\Gamma'$, an infinite collection of elements whose Margulis invariants remain bounded. Actually, we will find these elements in an even smaller subgroup $\Gamma'' \subset \Gamma'$, that is still generalized Schottky but possibly with a different parameter.

\begin{assumption}
From now on, we assume that $\rho$ satisfies condition~\ref{itm:bad} from the Main Theorem. Using the notation introduced in Proposition~7.8 in~\cite{Smi16b}, it can now be rephrased as follows:
\begin{equation}
\forall v \in V_0^\transl,\quad -w_0(v) = -v.
\end{equation}
\end{assumption}

Note that conditions \ref{itm:split} and~\ref{itm:non_swinging} from the Main Theorem are not required until the next section.

With this assumption, Proposition~9.1 from~\cite{Smi16b} reduces to a particularly simple form:
\begin{remark}
\label{opposite_margulis_invariant}
For every $\rho$-regular map $g \in G \ltimes V$, we then have
\begin{equation}
\label{eq:opposite_margulis_invariant}
M(g^{-1}) = -M(g).
\end{equation}
In other terms, the identity $M(g^n) = n M(g)$ now holds for all integer values of~$n$, positive \emph{and negative}.
\end{remark}

\begin{proposition}
\label{infinitely_many_bounded_invariants}
There exists a constant $C' > 1$, an integer $k \geq 2$ and elements $\gamma_1, \ldots, \gamma_k \in \Gamma'$ with the following properties:
\begin{itemize}
\item The family $(\gamma_1, \ldots, \gamma_k)$ is $C'$-Schottky of type $X_0$.
\item The group $\Gamma''$ generated by this family contains an infinite subset $S$ of elements that are cyclically reduced (as words in~$\Gamma''$) and such that
\[\exists R \geq 0,\; \forall \gamma \in S,\quad \|M(\gamma)\| \leq R.\]
\end{itemize}
\end{proposition}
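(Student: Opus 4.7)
I propose the simplest possible choice: take $k = 2$, $C' = C$, $\gamma_1 := g^N$, and $\gamma_2 := h^N$, so that $\Gamma'' = \Gamma'$. The Schottky property is then immediate from the construction in Section~\ref{sec:proper_construction}, and the entire work of the proof reduces to exhibiting the infinite set $S$. The guiding principle is the following asymmetry: Corollary~\ref{Margulis_additivity} introduces only a \emph{constant} error $\eps_{9.3}(2C)$ per application, whereas the identity $M(f^n) = n M(f)$ of Remark~\ref{opposite_margulis_invariant} (now valid for all integers $n$ thanks to Assumption~\ref{itm:bad}) is \emph{exact}. So one should look for an infinite family of cyclically reduced words that decompose into a bounded number of cyclically reduced pieces each of which is a pure power.

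\textbf{The candidate family.} The most natural candidates are the commutator-like words
\[s_n := \gamma_1 \gamma_2^n \gamma_1^{-1} \gamma_2^{-n}, \qquad n \geq 1.\]
I would take $S := \setsuch{s_n}{n \geq 1}$. Because $\Gamma''$ is free on $\{\gamma_1, \gamma_2\}$, these elements are pairwise distinct; and each $s_n$ is cyclically reduced, since its first letter $\gamma_1$ and last letter $\gamma_2^{-1}$ are not inverses of each other. In particular $S$ is infinite.

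\textbf{Bounding the Margulis invariants.} Write $s_n = u_n v_n$ with $u_n := \gamma_1 \gamma_2^n$ and $v_n := \gamma_1^{-1} \gamma_2^{-n}$. One checks routinely that each of the six pairs occurring below consists of two cyclically reduced words whose product is still cyclically reduced, so that Corollary~\ref{Margulis_additivity} applies. Three applications give
\begin{align*}
M(u_n) &= M(\gamma_1) + M(\gamma_2^n) + E_1,\\
M(v_n) &= M(\gamma_1^{-1}) + M(\gamma_2^{-n}) + E_2,\\
M(s_n) &= M(u_n) + M(v_n) + E_3,
\end{align*}
with $\|E_i\| \leq \eps_{9.3}(2C)$ for each $i$. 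Remark~\ref{opposite_margulis_invariant} then provides the exact identities $M(\gamma_2^n) = n M(\gamma_2)$, $M(\gamma_2^{-n}) = -n M(\gamma_2)$, and $M(\gamma_1^{-1}) = -M(\gamma_1)$. Summing everything, the $M(\gamma_1)$ and $n M(\gamma_2)$ contributions cancel exactly, leaving
\[\|M(s_n)\| \leq 3\,\eps_{9.3}(2C) =: R,\]
independently of $n$. This proves the proposition.

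\textbf{Main obstacle.} There is essentially no real obstacle, but the key conceptual point worth flagging is why the naive attempt fails: applying Proposition~\ref{Schottky_properties}(iv) directly to $s_n$, viewed as a word of length $2n+2$ in the generators, yields an error $(2n+1)\eps_{9.3}(2C)$ growing with $n$, which would not produce a uniform bound. It is precisely the combination of the commutator shape (which keeps the number of applications of Corollary~\ref{Margulis_additivity} to just three) with the exactness of $M$ on powers (afforded by Assumption~\ref{itm:bad}) that produces a bound uniform in $n$.
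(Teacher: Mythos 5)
Your argument is correct, but it takes a genuinely different (and shorter) route than the paper. The paper splits into two cases according to $d := \dim \Span(M(\Gamma'))$: when $d \leq 1$ it writes $M(g) = cM(h)$ and takes $S = \setsuch{g^n h^{-\lfloor cn \rfloor}}{n \in \mathbb{N}}$; when $d \geq 2$ it first passes to a further Schottky subgroup $\Gamma''$ on $d$ generators whose Margulis invariants form a basis of $\Span(M(\Gamma'))$ (raised to a power $N'$ large enough that the additivity error $\eps_{9.3}(2C')$ is negligible compared to the generators' invariants), and then runs a greedy descent: given any cyclically reduced prefix, it repeatedly appends one generator whose Margulis invariant points roughly toward the origin, padded by an extra letter when needed to preserve cyclic reducedness. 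Your commutator family $s_n = \gamma_1\gamma_2^n\gamma_1^{-1}\gamma_2^{-n}$ collapses both cases into one: the three applications of Corollary~\ref{Margulis_additivity} are legitimate (each of the pairs $(\gamma_1,\gamma_2^n)$, $(\gamma_1^{-1},\gamma_2^{-n})$, $(u_n,v_n)$ consists of cyclically reduced words concatenating without cancellation into a cyclically reduced product, so Proposition~\ref{Schottky_properties}(v) supplies the uniform $2C$-non-degeneracy behind the uniform error $\eps_{9.3}(2C)$), and the exact identities $M(\gamma_2^{\pm n}) = \pm n M(\gamma_2)$ and $M(\gamma_1^{-1}) = -M(\gamma_1)$ from Remark~\ref{opposite_margulis_invariant} — the one place where condition (i) of the Main Theorem enters, exactly as in the paper — give the uniform bound $3\eps_{9.3}(2C)$. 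Since the final properness argument in Section~\ref{sec:conclusion} only needs an infinite set of cyclically reduced elements with bounded invariants, your construction suffices. What the paper's longer argument buys is a stronger, density-type conclusion (every cyclically reduced word is a prefix of some element of $S$), together with an argument closer in spirit to Theorem~A of~\cite{AMS02} that degrades more gracefully if the antisymmetry $M(f^{-1}) = -M(f)$ were only approximate; what yours buys is brevity, no case distinction, no auxiliary subgroup, and an explicit small value of $R$. The only cosmetic points: take $C' := \max(C, 2)$ (say) to literally meet the requirement $C' > 1$, and note that the pairwise distinctness of the $s_n$ rests on freeness of the Schottky group on its generators — a fact the paper also uses implicitly throughout.
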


To prove this, we distinguish two cases: either the Margulis invariants of the elements of~$\Gamma'$ are all collinear, or they span a vector subspace of~$V_0^\transl$ of dimension at least~$2$.

\subsection{Case where $M(\Gamma')$ is contained in a line}
\label{sec:collinear}

In this case, we can basically apply the same techniques as for the proof of Theorem~A in~\cite{AMS02}.

\begin{proof}[Proof of Proposition~\ref{infinitely_many_bounded_invariants} when $\dim \Span(M(\Gamma')) \leq 1$.]
In this case, restricting to a smaller group is unnecessary: we simply take $C' := C$, $k := 2$, $\gamma_1 := g$ and~$\gamma_2 := h$ (so that $\Gamma'' = \Gamma'$).

By assumption, the vectors $M(g)$ and~$M(h)$ must in particular be linearly dependent. Without loss of generality (exchanging $g$ and~$h$ if needed), suppose that we have
\[M(g) = c M(h)\]
for some $c \in \mathbb{R}$. Now we deduce from Proposition~9.3 in~\cite{Smi16b} that for every natural integer~$n$, we have
\begin{align*}
\|M(g^n h^{- \lfloor c n \rfloor})\| &\leq \|n M(g) - \lfloor c n \rfloor M(h)\| + \eps_{9.3}(C) \\
                                     &\leq \|M(h)\| + \eps_{9.3}(C).
\end{align*}
(To ensure that this works no matter the sign of~$c$, we rely on~\eqref{eq:opposite_margulis_invariant}.) Thus taking $R$ to be the right-hand side of the last inequality, the set
\[S = \setsuch{g^n h^{- \lfloor c n \rfloor}}{n \in \mathbb{N}}\]
satisfies the required conditions.
\end{proof}

\subsection{Case where $M(\Gamma')$ is not contained in a line: construction of the subgroup}
\label{sec:non_collinear_subgroup}

Let us now assume that $\dim \Span(M(\Gamma')) \geq 2$. We split the proof of this case into two parts: in this subsection we construct the generalized Schottky subgroup $\Gamma'' \subset \Gamma'$, and in the next subsection we construct the set~$S$ inside it.

The number of generators~$k$ of the subgroup will be taken to be
\[k := \dim \Span(M(\Gamma')).\]
Let us choose once and for all
\[g_1, \ldots, g_k\]
some cyclically reduced elements of~$\Gamma'$ whose Margulis invariants form a basis of the vector subspace spanned by~$M(\Gamma')$. (We may assume them to be cyclically reduced since, by construction, the Margulis invariant of an element only depends on its conjugacy class.)

We start by checking that these elements and their inverses are pairwise in general position:
\begin{lemma}
\label{new_generators_transverse}
For any two indices $i, i' \in \{1, \ldots, k\}$ and signs $\sigma$, $\sigma'$ such that $(i', \sigma') \neq (i, -\sigma)$, the pair of affine parabolic spaces
\[\left( A^\subgap_{g_i^\sigma},\; A^\sublap_{g_{i'}^{\sigma'}} \right)\]
is transverse.
\end{lemma}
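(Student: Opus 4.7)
The plan is to apply Lemma~\ref{almost_always_transverse} directly to the pair of words $g_i^\sigma, g_{i'}^{\sigma'} \in \Gamma'$. Both are nonempty and cyclically reduced: the $g_i$ were chosen cyclically reduced by construction, and this property is preserved under inversion and under taking powers. The lemma then yields the desired transversality unless we fall into the exceptional configuration
\[g_i^\sigma = f^a, \qquad g_{i'}^{\sigma'} = f^{-b}\]
for some $f \in \Gamma'$ and positive integers $a, b$. The entire proof therefore reduces to excluding this configuration under the hypothesis $(i', \sigma') \neq (i, -\sigma)$.

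To exclude it, the plan is to pass to Margulis invariants. By Remark~\ref{opposite_margulis_invariant}, the identity $M(g^n) = nM(g)$ now holds for \emph{all} integers $n$, so the exceptional relations give
\[\sigma\, M(g_i) = a\, M(f), \qquad \sigma'\, M(g_{i'}) = -b\, M(f).\]
Both $M(g_i)$ and $M(g_{i'})$ are therefore scalar multiples of $M(f)$, hence collinear. But by construction $M(g_1), \ldots, M(g_k)$ form a basis of $\Span(M(\Gamma'))$, so they are linearly independent; this forces $i = i'$.

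With $i = i'$, the two relations above combine into $(\sigma a + \sigma' b)\, M(f) = 0$. Since $M(g_i) = \sigma a\, M(f)$ is a nonzero basis vector, we must have $M(f) \neq 0$, and hence $\sigma a + \sigma' b = 0$. Given $a, b > 0$, this forces $\sigma' = -\sigma$, contradicting the hypothesis $(i', \sigma') \neq (i, -\sigma)$. The exceptional configuration is therefore impossible, and the pair is transverse. The only mild subtlety in the argument is keeping track of signs when invoking Remark~\ref{opposite_margulis_invariant}; once this is handled, everything reduces to the basis property of the $M(g_i)$ and a direct appeal to Lemma~\ref{almost_always_transverse}.
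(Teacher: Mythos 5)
Your proof is correct and follows essentially the same route as the paper: apply Lemma~\ref{almost_always_transverse}, then use the exact identity $M(f^n)=nM(f)$ from Remark~\ref{opposite_margulis_invariant} together with the linear independence of the $M(g_i)$ to rule out the exceptional configuration. The only cosmetic difference is in excluding $\sigma'=\sigma$ once $i=i'$: you deduce $\sigma a+\sigma' b=0$ from $M(f)\neq 0$, whereas the paper instead observes that the two spaces would then be the dynamical spaces of a single $\rho$-regular map and hence automatically transverse; both arguments are valid.
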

This is an easy consequence of Lemma~\ref{almost_always_transverse}.
\begin{proof}
By contradiction, let $i, i', \sigma, \sigma'$ be some indices and signs such that the pair is \emph{not} transverse. From Lemma~\ref{almost_always_transverse}, this is only possible if we have
\[\begin{cases}
g_i^\sigma = f^a \\
g_{i'}^{\sigma'} = f^{-b}
\end{cases}\]
for some $f \in \Gamma'$ and some positive integers $a$ and~$b$. Using the identity~\eqref{eq:opposite_margulis_invariant}, this means that the Margulis invariants of the maps $g_i$ and~$g_{i'}$ are related by
\[M(g_i) = -\frac{\sigma' a}{\sigma b}M(g_{i'}).\]
By assumption the Margulis invariants of the different maps $g_i$ are linearly independent; so we must have $i = i'$. If we additionally had $\sigma = \sigma'$, then the two parabolic spaces would be the dynamic spaces of one single $\rho$-regular map, so they would be transverse. So we necessarily have $\sigma = -\sigma'$.
\end{proof}

This allows us to construct our subgroup:
\begin{definition}~
\begin{itemize}
\item In the light of Lemma~\ref{new_generators_transverse}, we fix a constant~$C' \geq 1$ such that all of the pairs of spaces concerned by this lemma (there are $(2k)^2 - 2k$ of them) are $C'$-non-degenerate.
\item We call $\phi_{\Gamma'}$ the linear map that maps any vector in~$\Span(M(\Gamma'))$ to its coordinates in the basis $(M(g_1), \ldots, M(g_k))$.
\item We fix an integer $N'$ large enough that, for every $i = 1, \ldots, k$ and $\sigma = \pm 1$, we have:
\[s_{X_0}(g_i^{\sigma N'}) \leq s_{10.2}(C'),\]
where $s_{10.2}$~is the constant from Proposition~10.2 in~\cite{Smi16b}. This is possible thanks to Proposition~7.23.(ii) from~\cite{Smi16b}. Additionally, we require $N'$ to satisfy
\begin{equation}
\label{eq:N_lower_bound}
N' \geq 12 \sqrt{k} \left\|\phi_{\Gamma'}\right\| \eps_{9.3}(2C'),
\end{equation}
where $\eps_{9.3}$~is the constant from Proposition~9.3 in~\cite{Smi16b}.
\item Finally, for all $i = 1, \ldots, k$, we set $\gamma_i := g_i^{N'}$; and we set $\Gamma'' := \langle \gamma_1, \ldots, \gamma_k \rangle$. Then the first assumption on $N'$ ensures that the group $\Gamma''$ is indeed $C'$-Schottky of type~$X_0$. (The second assumption basically ensures that the Margulis invariants of its generators are large enough that the error term in Proposition~\ref{Margulis_additivity}.(iv) becomes negligible.)
\end{itemize}
\end{definition}

\subsection{Case where $M(\Gamma')$ is not contained in a line: construction of the sequence}
\label{sec:non_collinear_sequence}

It now remains to construct an infinite subset $S \subset \Gamma''$ of elements whose Margulis invariants remain bounded.

The basic idea is as follows: start with an arbitrary prefix $w$; then we can always complete it to a word whose Margulis invariant is in some fixed compact set. Indeed thanks to~\eqref{eq:opposite_margulis_invariant}, no matter where we are in the vector subspace $\Span(M(\Gamma'))$, we can always find a letter among the generators and their inverses whose Margulis invariant points roughly towards the origin. Then ``approximate additivity'' of Margulis invariants ensures that, when we multiply by this letter, the norm of the Margulis invariant decreases (or stays small). This is roughly the content of Lemma~\ref{can_always_reduce_norm} below.

There is however a complication: we need to ensure, at every step, that the word remains cyclically reduced. So every time we append a letter, we might need to add some ``padding'' to protect it from possible cancellations. The following technical lemma tells us that even with this ``padding'', we can still manage to decrease the norm of the vector (provided that we choose the ``padding'' wisely, and unless the vector was already small to begin with).

\begin{lemma}
\label{vector_norm_inequality}
Take any vector $\alpha = (c_1, \ldots, c_k) \in \mathbb{R}^k$ with sufficiently large Euclidean norm, namely
\[\textstyle \|\alpha\| \geq 5\sqrt{k} + \frac{\sqrt{5}}{2}.\]
Let $i$~be the index of the component having the largest absolute value, and let $\sigma \in \{\pm 1\}$ be its sign, so that we have
\[\sigma c_i = |c_i| = \max_{j = 1, \ldots, k} |c_j|.\]
Let $j$ be any other index (here we use the assumption that $k \geq 2$), and let $\tau \in \{\pm 1\}$ be such that $\tau c_j$~is nonnegative. Finally, let $\beta = \sigma e_i + x \tau e_j$, where $x$ is either $0$, $1$ or~$2$ (and $(e_1, \ldots, e_k)$ stands for the canonical basis of~$\mathbb{R}^k$).

Then we have
\[\|\alpha - \beta\| \leq \|\alpha\| - \frac{1}{2\sqrt{k}}.\]
\end{lemma}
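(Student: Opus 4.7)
My strategy is direct computation followed by a case split on $x$. The first step is to expand by polarisation:
\[
\|\alpha - \beta\|^2 = \|\alpha\|^2 - 2\langle \alpha, \beta\rangle + \|\beta\|^2,
\]
noting that the sign conventions $\sigma c_i = |c_i|$ and $\tau c_j \geq 0$ give $\langle \alpha, \beta\rangle = |c_i| + x|c_j|$ and $\|\beta\|^2 = 1 + x^2$. Setting $D := 2|c_i| + 2x|c_j| - (1 + x^2)$, this becomes $\|\alpha - \beta\|^2 = \|\alpha\|^2 - D$. Squaring the target inequality $\|\alpha - \beta\| \leq \|\alpha\| - \frac{1}{2\sqrt{k}}$ then reduces the lemma to the scalar statement
\[
D \geq \frac{\|\alpha\|}{\sqrt{k}} - \frac{1}{4k},
\]
which will be what I aim to establish.

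The main ingredient I plan to use is the estimate $|c_i| \geq \|\alpha\|/\sqrt{k}$, which follows from $|c_i| = \max_l |c_l|$ and $\|\alpha\|^2 = \sum_l c_l^2 \leq k|c_i|^2$. Substituting this into $D$ and splitting on $x \in \{0,1,2\}$, the required inequality reduces to
\[
\frac{\|\alpha\|}{\sqrt{k}} + 2x|c_j| \geq 1 + x^2 - \frac{1}{4k}.
\]
For $x = 0$ and $x = 1$ this should follow immediately from $\|\alpha\|/\sqrt{k} \geq 3$, itself a consequence of the hypothesis $\|\alpha\|/\sqrt{k} \geq 3 + \frac{\sqrt{3}}{2\sqrt{k}}$.

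The delicate case will be $x = 2$, which demands $\|\alpha\|/\sqrt{k} + 4|c_j| \geq 5 - 1/(4k)$. If $|c_j| \geq \tfrac{1}{2}$ this is immediate from the hypothesis, so the hard regime is $|c_j| < \tfrac{1}{2}$, where the crude bound on $|c_i|$ must be replaced by the sharper estimate
\[
|c_i|^2 \geq \frac{\|\alpha\|^2 - |c_j|^2}{k-1},
\]
derived from $|c_l| \leq |c_i|$ for the $k-2$ indices $l \neq i, j$. This interplay—trading the smallness of $|c_j|$ against a gain in $|c_i|$—together with the precise form of the constant $\frac{\sqrt{3}}{2\sqrt{k}}$ in the hypothesis (which appears calibrated to make the worst configuration, where the $k-2$ remaining coordinates are all equal to $|c_i|$, just close the inequality), is the main obstacle; a careful numerical manipulation will be needed to finish.
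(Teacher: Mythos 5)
Your setup is correct, and in one important respect more careful than the paper's: you correctly compute $\|\beta\|^2 = 1 + x^2$, and your reduction of the lemma to the scalar inequality $2|c_i| + 2x|c_j| - (1+x^2) \geq \frac{\|\alpha\|}{\sqrt{k}} - \frac{1}{4k}$ is an exact reformulation of the claim; the cases $x=0,1$ are indeed immediate. But the case you flag as delicate ($x=2$ with $|c_j|$ small) is not a matter of ``careful numerical manipulation'': the inequality genuinely fails there, and so does the lemma as stated. Take $k=4$ and $\alpha = (4,4,4,0)$, so that $\|\alpha\| = 4\sqrt{3} \approx 6.928$ exceeds the threshold $\sqrt{4}\left(3 + \tfrac{\sqrt{3}}{4}\right) \approx 6.866$; with $i=1$, $j=4$, $x=2$ and $\beta = e_1 + 2e_4$ one gets $\|\alpha-\beta\| = \sqrt{45} \approx 6.708$, whereas $\|\alpha\| - \tfrac{1}{2\sqrt{k}} \approx 6.678$. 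Your proposed rescue --- the sharper bound $|c_i|^2 \geq (\|\alpha\|^2 - |c_j|^2)/(k-1)$ --- only improves $2|c_i|$ from $2\|\alpha\|/\sqrt{k}$ to roughly $2\|\alpha\|/\sqrt{k-1}$, a gain of order $1/k$ once multiplied out, while the deficit you must cover is a fixed constant (about $2$); so no refinement of that estimate can close the gap.

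The comparison with the paper is instructive: the paper's own proof writes the numerator $\|\alpha\|^2 - \|\alpha-\beta\|^2$ as $2(|c_i|+x|c_j|) - (1+x)$ and bounds $\|\beta\|$ by $\sqrt{3}$, i.e.\ it silently uses $\|\beta\|^2 = 1+x$ in place of $1+x^2$; these agree for $x\in\{0,1\}$ but not for $x=2$. Your computation is the correct one and exposes a genuine error in the statement, not just in your attempt. The natural repair is to strengthen the hypothesis to $\|\alpha\| \geq \sqrt{k}\left(5 + \tfrac{\sqrt{5}}{2\sqrt{k}}\right)$: then $|c_i| \geq 5 + \tfrac{\sqrt{5}}{2\sqrt{k}}$ and the corrected one-line chain
\[
\|\alpha\| - \|\alpha-\beta\| \;=\; \frac{2(|c_i|+x|c_j|) - (1+x^2)}{\|\alpha\| + \|\alpha-\beta\|} \;\geq\; \frac{2|c_i|-5}{2\sqrt{k}\,|c_i| + \sqrt{5}} \;\geq\; \frac{1}{2\sqrt{k}}
\]
goes through uniformly in $x$ with no case split at all (the last step because the middle expression is increasing in $|c_i|$ and equals $\tfrac{1}{2\sqrt{k}}$ at $|c_i| = 5 + \tfrac{\sqrt{5}}{2\sqrt{k}}$). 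This change is harmless downstream: it only inflates the threshold in Lemma~\ref{can_always_reduce_norm} and the constant $R$ in Proposition~\ref{infinitely_many_bounded_invariants}, whose exact values are irrelevant to the rest of the argument.
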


\begin{proof}
Note that from the definition of~$c_i$, it follows that $\|\alpha\| \leq \sqrt{k} |c_i|$. In particular we then have
\[\textstyle |c_i| \geq \frac{1}{\sqrt{k}}\|\alpha\| \geq 5 + \frac{\sqrt{5}}{2\sqrt{k}}.\]
Now we compute:
\begin{align*}
\|\alpha\| - \|\alpha - \beta\| &=    \frac{\|\alpha\|^2 - \|\alpha - \beta\|^2}{\|\alpha\| + \|\alpha - \beta\|} \\
                                &=    \frac{2(|c_i| + x |c_j|) - (1 + x^2)}{\|\alpha\| + \|\alpha - \beta\|} \\
                                &\geq \frac{2 |c_i| - 5}{2\|\alpha\| + \|\beta\|} & \text{ (this works because $2 |c_i| - 5 \geq 0$)} \\
                                &\geq \frac{2 |c_i| - 5}{2\sqrt{k}|c_i| + \sqrt{5}} \\
                                &=    \frac{1}{\sqrt{k}} - \frac{5 + \frac{\sqrt{5}}{\sqrt{k}}}{2\sqrt{k}|c_i| + \sqrt{5}} \\
                                &\geq \frac{1}{\sqrt{k}} - \frac{5 + \frac{\sqrt{5}}{\sqrt{k}}}{2\sqrt{k}\left(5 + \frac{\sqrt{5}}{2\sqrt{k}}\right) + \sqrt{5}} \\
                                &= \frac{1}{2\sqrt{k}}. & \qedhere
\end{align*}
\end{proof}

The following lemma now shows how we can decrement the Margulis invariant of a word by appending one, two or three letters.

\begin{lemma}
\label{can_always_reduce_norm}
Let $w$~be any cyclically reduced word on the generators $\gamma_1, \ldots, \gamma_k$, such that
\begin{equation}
\label{eq:N_alpha_lower_bound}
\|\phi_{\Gamma'}(M(w))\| \geq \left(5\sqrt{k} + \frac{\sqrt{5}}{2}\right)N'.
\end{equation}
Then there exists a cyclically reduced word~$u$ on the same generators such that $w u$ is still cyclically reduced (without canceling any letters), and we have
\[\|\phi_{\Gamma'}(M(w u))\| \leq \|\phi_{\Gamma'}(M(w))\| - \frac{N'}{4\sqrt{k}}.\]
\end{lemma}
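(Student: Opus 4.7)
The plan is to append to $w$ a short word $u$ of length at most three, chosen so as to apply Lemma~\ref{vector_norm_inequality} to $\alpha := \phi_{\Gamma'}(M(w))/N' \in \mathbb{R}^k$. The hypothesis~\eqref{eq:N_alpha_lower_bound} is exactly what is needed for that lemma to apply. So let $i$ and $\sigma$ be the index and sign of the largest-magnitude coordinate of $\alpha$, pick any $j \neq i$ (which exists since $k \geq 2$), and let $\tau \in \{\pm 1\}$ be the sign of the $j$-th coordinate of $\alpha$. I want $\phi_{\Gamma'}(M(u))$ to be essentially $-N'(\sigma e_i + x\tau e_j)$ for a suitable $x \in \{0,1,2\}$, so that the vector we subtract matches $\beta$ in Lemma~\ref{vector_norm_inequality}. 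Since $M(\gamma_l^{\pm 1}) = \pm N' M(g_l)$ (using~\eqref{eq:opposite_margulis_invariant} together with $\gamma_l = g_l^{N'}$), this is achieved by letting $u$ consist of one copy of $\gamma_i^{-\sigma}$ together with $x$ copies of $\gamma_j^{-\tau}$.

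The main obstacle is to arrange these letters so that $wu$ remains cyclically reduced. Let $a$ and $b$ denote the first and last letter of $w$. When neither equals $\gamma_i^{\sigma}$, I simply take $u = \gamma_i^{-\sigma}$ (the case $x=0$). When $b = \gamma_i^{\sigma}$ but $a \neq \gamma_i^{\sigma}$, I pad on the left: $u = \gamma_j^{-\tau}\gamma_i^{-\sigma}$; symmetrically in the opposite configuration; and finally when $a = b = \gamma_i^{\sigma}$, I pad on both sides: $u = \gamma_j^{-\tau}\gamma_i^{-\sigma}\gamma_j^{-\tau}$ (the cases $x=1$ and $x=2$). In every situation, since $j \neq i$, the padding letter $\gamma_j^{-\tau}$ neither cancels with $b$ at the junction nor forms an inverse pair with $a$ in the cyclic closure, and $u$ itself is automatically cyclically reduced. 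It is precisely to leave enough room for this three-way padding that Lemma~\ref{vector_norm_inequality} was stated with the flexibility $x \in \{0,1,2\}$.

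Having built $u$, I apply Corollary~\ref{Margulis_additivity} at most three times (once or twice inside $u$ to split $M(u)$ as $\sum_m M(u_m)$, and once to split $M(wu)$ as $M(w) + M(u)$), obtaining a total error of at most $3\varepsilon_{9.3}(2C')$. Applying the linear map $\phi_{\Gamma'}$ and using $\phi_{\Gamma'}\bigl(\sum_m M(u_m)\bigr) = -N'\beta$ with $\beta = \sigma e_i + x\tau e_j$, the triangle inequality followed by Lemma~\ref{vector_norm_inequality} yields
\[
\|\phi_{\Gamma'}(M(wu))\| \;\leq\; \|\phi_{\Gamma'}(M(w))\| - \frac{N'}{2\sqrt{k}} + 3\,\|\phi_{\Gamma'}\|\,\varepsilon_{9.3}(2C').
\]
The calibrated lower bound~\eqref{eq:N_lower_bound} on $N'$ was tuned so that $3\|\phi_{\Gamma'}\|\varepsilon_{9.3}(2C')$ absorbs at most half of the gain $\frac{N'}{2\sqrt{k}}$, leaving the desired inequality $\|\phi_{\Gamma'}(M(wu))\| \leq \|\phi_{\Gamma'}(M(w))\| - \frac{N'}{4\sqrt{k}}$.
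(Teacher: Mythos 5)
Your proposal is correct and follows essentially the same route as the paper's own proof: the same choice of $i$, $\sigma$, $j$, $\tau$, the same word $u = l_1\gamma_i^{-\sigma}l_2$ with one-letter padding governed by whether the first/last letter of $w$ equals $\gamma_i^{\sigma}$, and the same combination of Corollary~\ref{Margulis_additivity} (three applications, error $3\eps_{9.3}(2C')$), Lemma~\ref{vector_norm_inequality} rescaled by $N'$, and the calibration~\eqref{eq:N_lower_bound}. Nothing further to add.
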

\begin{proof}
Let $c_1, \ldots, c_k$ be the coordinates of~$M(w)$ in the basis~$(M(g_1), \ldots, M(g_k))$. Let $i$~be the index of the one that has the largest absolute value, and let $\sigma \in \{\pm 1\}$ be its sign, so that we have
\[\sigma c_i = |c_i| = \max_{j = 1, \ldots, k} |c_j|.\]
Let $j$ be any other index (here we use the assumption that $k \geq 2$), and let $\tau \in \{\pm 1\}$ be such that $\tau c_j$~is nonnegative.

We now set $u = l_1 \gamma_i^{-\sigma} l_2$, where:
\[l_1 := \begin{cases}
\gamma_j^{-\tau} &\text{if the last letter of $w$ is } \gamma_i^\sigma, \\
1                &\text{if it is anything else;}
\end{cases}\]
\[l_2 := \begin{cases}
\gamma_j^{-\tau} &\text{if the first letter of $w$ is } \gamma_i^\sigma, \\
1                &\text{if it is anything else.}
\end{cases}\]
In other terms, $u$ is equal to either $\gamma_i^{-\sigma}$, $\gamma_j^{-\tau} \gamma_i^{-\sigma}$, $\gamma_i^{-\sigma} \gamma_j^{-\tau}$ or $\gamma_j^{-\tau} \gamma_i^{-\sigma} \gamma_j^{-\tau}$, depending on the first and last letter of~$w$. Clearly $u$ is cyclically reduced in all four cases; and its construction ensures that $w u$ is still cyclically reduced.

By Corollary~\ref{Margulis_additivity} and Remark~\ref{opposite_margulis_invariant}, we then have
\[M(w u) = M(w) - \sigma M(\gamma_i) - x \tau M(\gamma_j) + E \quad\text{ with }\quad \|E\| \leq 3\eps_{9.3}(2C'),\]
where $x = 0, 1$ or~$2$ depending on the first and last letter of~$w$. Applying $\phi_{\Gamma'}$ to this estimate, we get
\[\phi_{\Gamma'}(M(w u)) = \phi_{\Gamma'}(M(w)) - N'(\sigma e_i + x \tau e_j) + E' \quad\text{ with }\quad \|E'\| \leq 3\left\|\phi_{\Gamma'}\right\|\eps_{9.3}(2C').\]
Now recall the lower bound~\eqref{eq:N_alpha_lower_bound} that we have on $\phi_{\Gamma'}(M(w))$. This allows us to apply Lemma~\ref{vector_norm_inequality}, rescaled by~$N'$, to the right-hand side of this formula (error term~$E'$ excluded). We obtain that
\begin{align*}
\|\phi_{\Gamma'}(M(w u))\| &\leq \|\phi_{\Gamma'}(M(w))\| - \frac{1}{2\sqrt{k}}N' + \|E'\| \\
                                  &\leq \|\phi_{\Gamma'}(M(w))\| - \frac{1}{2\sqrt{k}}N' + 3\left\|\phi_{\Gamma'}\right\|\eps_{9.3}(2C').
\end{align*}
Now recall that $N'$ has been chosen to satisfy the lower bound~\eqref{eq:N_lower_bound}; this translates to
\[3\left\|\phi_{\Gamma'}\right\|\eps_{9.3}(2C') \leq \frac{1}{4\sqrt{k}}N'.\]
The conclusion follows.
\end{proof}

\begin{proof}[Proof of Proposition~\ref{infinitely_many_bounded_invariants} when $\dim \Span(M(\Gamma')) \geq 2$.]
By applying Lemma~\ref{can_always_reduce_norm} iteratively, we see that for any cyclically reduced word~$w$ in~$\Gamma''$, there is another cyclically reduced word~$w'$ having~$w$ as a prefix and whose Margulis invariant is bounded by
\[\|\phi_{\Gamma'}(M(w'))\| \leq \left(5\sqrt{k} + \frac{\sqrt{5}}{2}\right)N'.\]
Let $S$ be the set of all such words~$w'$. By construction these words can have \emph{any} cyclically reduced word as a prefix, so clearly $S$ is infinite. Moreover, the Margulis invariants of its elements are all bounded by the constant
\[R = \|\phi_{\Gamma'}^{-1}\|\left(5\sqrt{k} + \frac{\sqrt{5}}{2}\right)N',\]
as required.
\end{proof}

\section{Proof of non-properness}
\label{sec:conclusion}

From boundedness of Margulis invariants, we may now deduce that the group does not act properly.

To do this, we have to rely on all the conditions of the Main Theorem:
\begin{assumption}
In addition to condition~\ref{itm:bad}, the representation~$\rho$ also satisfies conditions \ref{itm:split} and~\ref{itm:non_swinging} of the Main Theorem.
\end{assumption}
\begin{remark}
\label{quasi_translations_are_translations}
Using notations from Section~7.2 from~\cite{Smi16b}, these two conditions are easily seen to be equivalent to the statement
\begin{equation}
\label{eq:translation_is_all}
V^\transl_0 = V^0 = V^\sube_0,
\end{equation}
or equivalently
\begin{equation}
\label{eq:extra_is_nothing}
V^\extra_0 = V^\subenz \oplus (V^\extra_0 \cap V^0) = 0
\end{equation}
(where $V^\subenz$, defined in~(7.3) in~\cite{Smi16b}, is roughly the part of~$V^\sube_0$ other than $V^0$). In fact, \ref{itm:split} and~\ref{itm:non_swinging} respectively account for each for the two equalities in~\eqref{eq:translation_is_all}, or for the vanishing of each direct summand in~\eqref{eq:extra_is_nothing}.

Plugging this into Proposition~7.8 in~\cite{Smi16b}, the two conditions together mean that quasi-translations are actually simply translations.
\end{remark}

\begin{proof}[Proof of the Main Theorem.]
We will now deduce from Proposition~\ref{infinitely_many_bounded_invariants} that $\Gamma''$ (hence, \emph{a fortiori}, the larger group~$\Gamma$) does not act properly on~$V_{\Aff}$.

We introduce the following compact subset of~$V_{\Aff}$:
\begin{align*}
K &:= B_A\Big(0,\; 2C'\sqrt{R^2 + 1}\Big) \cap V_{\Aff} \\
  &= B_{V_{\Aff}}\Big(p_0,\; \sqrt{(2C')^2(R^2 + 1) - 1}\Big),
\end{align*}
where $p_0$~is the chosen origin of~$V_{\Aff}$ (see Section~6.2 in~\cite{Smi16b}). We claim that
\[\forall \gamma \in S,\quad \gamma K \cap K \neq \emptyset;\]
since $S \subset \Gamma'' \subset \Gamma' \subset \Gamma$~is infinite, this completes the proof.

Indeed, take any $\gamma \in S \subset \Gamma''$. By construction, $\gamma$~is then a cyclically reduced word in~$\Gamma''$; in particular, by Proposition~10.2 in~\cite{Smi16b}, it is then $\rho$-regular and $2C'$-non-degenerate. Let $\phi_\gamma$~be an optimal canonizing map of~$\gamma$ (see Definition~7.20 in~\cite{Smi16b}), so that we have
\[\|\phi_\gamma\| \leq 2C' \quad\text{and}\quad \|\phi_\gamma^{-1}\| \leq 2C'.\]
In particular, this implies that
\begin{align}
\label{eq:contains_R_ball}
\phi_\gamma(K) &= \phi_\gamma \left( B_A\Big(0,\; 2C'\sqrt{R^2 + 1}\Big) \right) \cap V_{\Aff} \nonumber \\
                    &\supset B_A\Big(0,\; \sqrt{R^2 + 1}\Big) \cap V_{\Aff} \nonumber \\
                    &= B_{V_{\Aff}}(p_0, R).
\end{align}
Now recall (Proposition~7.10 from~\cite{Smi16b}) that the conjugate map $\phi_\gamma \circ \gamma \circ \phi_\gamma^{-1}$ acts on the space $A^\sube_0$ (which by definition contains $p_0$) by quasi-translation. But given the assumptions we made on~$\rho$, a quasi-translation is just a translation (see Remark~\ref{quasi_translations_are_translations} above). Moreover, by definition, the translation vector of the map~$\phi_\gamma \circ \gamma \circ \phi_\gamma^{-1}$ is then precisely equal to~$M(\gamma)$. Thus we have
\[\phi_\gamma \circ \gamma \circ \phi_\gamma^{-1} (p_0) = p_0 + M(\gamma).\]
Now since by assumption, we have $\|M(\gamma)\| \leq R$, it follows from~\eqref{eq:contains_R_ball} that the image set~$\phi_\gamma(K)$ meets its image by the conjugate map $\phi_\gamma \circ \gamma \circ \phi_\gamma^{-1}$. We conclude that the original set~$K$ meets its image by the original map~$\gamma$.
\end{proof}

\bibliographystyle{alpha}
\bibliography{/home/ilia/Documents/Travaux_mathematiques/mybibliography.bib}

\noindent {\scshape Yale University Mathematics Department, PO Box 208283, New Haven, CT 06520-8283, USA} \\
\noindent {\itshape E-mail address:} \url{ilia.smilga@normalesup.org}
\end{document}